\newtheorem{thm}{Theorem}[section]
\newtheorem{cor}[thm]{Corollary}
\newtheorem{lem}{Lemma}[section]
\newtheorem{prop}[thm]{Proposition}
\newtheorem{defn}[thm]{Definition}
\theoremstyle{Condition}
\theoremstyle{remark}
\newtheorem{rmk}{Remark}[section]
\theoremstyle{problem}
\newtheorem{pro}{Problem}[section]
\theoremstyle{Notation}
\newtheorem{nota}{Notation}[section]
\numberwithin{equation}{section}
\theoremstyle{example}
\numberwithin{equation}{section}
\begin{document}

\bigskip
\bigskip

\bigskip

\begin{center}

\textbf{\large A randomized intertial primal-dual fixed point algorithm for monotone
inclusions}

\end{center}

\begin{center}
Meng Wen $^{1,2}$, Shigang Yue$^{4}$, Yuchao Tang$^{3}$, Jigen Peng$^{1,2}$
\end{center}

\begin{center}
1. School of Mathematics and Statistics, Xi'an Jiaotong University,
Xi'an 710049, P.R. China \\
2. Beijing Center for Mathematics and Information Interdisciplinary
Sciences, Beijing, P.R. China

 3. Department of Mathematics, NanChang University, Nanchang
330031, P.R. China\\
4. School of Computer Science, University of Lincoln, LN6 7TS, UK
\end{center}

\footnotetext{\hspace{-6mm}$^*$ Corresponding author.\\
E-mail address:jgpengxjtu@126.com}

\bigskip

\noindent  \textbf{Abstract} In this paper, we propose a randomized intertial block-coordinate primal-dual fixed point algorithm to solve a wide array of monotone inclusion problems base on the modification
of the heavy ball method of Nesterov [41]. These methods rely on a sweep of blocks of variables which are activated at each iteration according to a random rule. To this end we formulate the inertial version of the Krasnosel'skii-Mann algorithm for approximating the set of fixed points of a quasinonexpansive operator, for which we also provide an exhaustive convergence analysis. As a by-product, we can obtain some intertial block-coordinate operator splitting methods for solving composite monotone inclusion and convex minimization problems.

\bigskip
\noindent \textbf{Keywords:} preconditioning; block-coordinate algorithm; inertial algorithm; stochastic quasi-Fej\'{e}r sequence

\noindent \textbf{MR(2000) Subject Classification} 47H09, 90C25,

\section{Introduction}

The problem of approaching the set of zeros of a sum of
monotone operators or minimizing a sum of proper lower-semicontinuous convex functions by means of primal-dual algorithms, where various linear operators are involved in the formulation, solving jointly its primal and dual forms and none of the linear operators needs to be inverted, continues to be a very attractive research area. This is due to its avoiding such inversions may
offer a significant advantage in terms of computational complexity when dealing with large-scale
problems [20-34] and its applicability in
the context of solving real-life problems which can be modeled as nondifferentiable convex optimization problems, like
those arising in image processing, signal recovery, support vector machines classification, location theory, clustering,
network communications, etc.

The main advantage of inertial methods is it can  change greatly
improves the performance of the scheme. These benefits have long been recognized [18, 35-37] and have become increasingly important in many monotone inclusion problems. In instances, Radu Ioan, Ern\"{o} Robert and Christopher [1] considered an inertial Douglas-Rachford splitting for finding the set of zeros of
the sum of two maximally monotone operators in Hilbert spaces and investigate its convergence.

Recently, Patrick L. and Jean-Christophe [2] introduced a block-coordinate fixed point algorithms with applications to nonlinear analysis and optimization in Hilbert spaces based on a notion of stochastic
quasi-Fej\'{e}r monotonicity. The algorithms were composed of quasinonexpansive operators or compositions of averaged nonexpansive
operators. In addition, they proved weak and strong convergence results of
the sequences generated by these algorithms.

Motivated and inspired by the above results, we introduce a  randomized intertial block-coordinate primal-dual fixed point algorithm to solve  monotone inclusion problems. The main benefit of block-coordinate algorithms is to result in implementations with reduced
complexity and memory requirements per iteration (see [38-40]). We obtain the weak and
strong convergence theorem of proposed algorithms. Furthermore, we using our algorithms to solve some monotone inclusion problems and large-scale convex minimization problems. Our results
also extend and improve the corresponding results of Patrick L. and Jean-Christophe [2] and Jean-Christophe  and Audrey  [3] and many
others.

The rest of this paper is organized as follows. In the next section,
 we recall the conception of  stochastic quasi-Fej\'{e}r monotone and some related notations and then deduce the  idea we
proposed stochastic inertial quasi-Fej\'{e}r monotonicity. Furthermore, we prove almost sure convergence results for  a  randomized intertial iteration scheme. In section 3,
we using this scheme to design a stochastic intertial block-coordinate fixed point
algorithms for relaxed iterations of quasinonexpansive operators and a stochastic intertial block-coordinate algorithms involving compositions of averaged nonexpansive operators. In section 4, Based on the algorithm in Section 3,
we present a preconditioned stochastic intertial block-coordinate
forward-backward algorithm.
In the final section, we consider the application of the presented
algorithms. First, we give a novel intertial block-coordinate prima-dual
algorithms for solving a zero of a sum of monotone operators. Moreover, we prove
the convergence of proposed algorithms. Second, an intertial block-coordinate primal-dual algorithms are designed
to solve composite convex optimization problems, and we study
their convergence.

\section{Stochastic inertial quasi-Fej\'{e}r monotonicity}

In various areas of nonlinear analysis and optimization to
unify the convergence proofs of deterministic algorithms, Fej\'{e}r monotonicity has been exploited far-ranging see, e.g., [21-24]. In the late
1960s, this conception was reconsidered in a stochastic setting in Euclidean spaces [25-27]. In this
section, we introduce a conception of stochastic inertial quasi-Fej\'{e}r monotone sequence in Hilbert spaces and
use the results to a general stochastic inertial iterative method.  We will use the following
notation throughout the paper.

\begin{nota}
 Let $H$ be a separable real Hilbert space with inner product $\langle\cdot,\cdot\rangle$, norm $\|\cdot\|$ and Borel $\sigma$-algebra $\mathcal{B}$. In $H$, we write $\rightarrow$ and $\rightharpoonup$
 indicate respectively weak and strong convergence. The sets of strong and weak cluster point of a sequence
$(x_{n})_{n\in\mathbb{N}}$ in  $H$ are  recorded as $\mathfrak{Q}(x_{n})_{n\in\mathbb{N}}$ and $\mathfrak{Y}(x_{n})_{n\in\mathbb{N}}$ respectively. $(\Omega,\mathcal{F},P)$ is the
underlying probability space. A $H$-valued random variable is a measurable
map $x:(\Omega,\mathcal{F})\rightarrow(H,\mathcal{B})$. The smallest $\sigma$-algebra generated by a family $\Phi$ of random variables is
denoted by $\sigma(\Phi)$. The expectation is denoted by $E(\cdot)$. Let $\mathfrak{F} = (\mathcal{F})_{n\in\mathbb{N}}$ be a sequence of
sub-sigma algebras of $\mathcal{F}$ such that $(\forall n\in\mathbb{N})\mathcal{F}_{n}\subset\mathcal{F}_{n+1}$. We denote by $\ell_{+}(\mathfrak{F})$ the set of sequences
of $[0,+\infty)$-valued random variables $(\pi_{n})_{n\in\mathbb{N}}$ such that, for every $n\in\mathbb{N}$, $\pi_{n}$ is $\mathcal{F}_{n}$-measurable. We set
$$(\forall p\in (0,+\infty)) ~~ \ell_{+}^{p}(\mathfrak{F})=\{(\pi_{n})_{n\in\mathbb{N}}\in\ell_{+}(\mathfrak{F})|\sum_{n\in\mathbb{N}}\pi_{n}^{p}<+\infty  ~~P-a.s.\},$$
and
$$\ell_{+}^{\infty}(\mathfrak{F})=\{(\pi_{n})_{n\in\mathbb{N}}\in\ell_{+}(\mathfrak{F})|\sup_{n\in\mathbb{N}}\pi_{n}<+\infty ~~ P-a.s.\}.$$
\end{nota}
Let $(x_{n})_{n\in\mathbb{N}}$ be sequences of $H$-valued random variables, we write for\\
$\mathfrak{X}=(\mathcal{X}_{n})_{n\in\mathbb{N}}$, where $(\forall n\in\mathbb{N})$   $\mathcal{X}_{n}=\sigma(x_{0},\cdots,x_{n})$.
\begin{lem}
([21, Corollary 2.14]). For $\forall a\in \mathbb{R}$ and  $\forall x,y\in H$,
$$\|ax+(1-a)y\|^{2}=a\|x\|^{2}+(1-a)\|y\|^{2}-a(1-a)\|x-y\|^{2}.\eqno{(2.1)}$$
\end{lem}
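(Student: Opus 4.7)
The identity is the standard convexity/parallelogram-type identity in a Hilbert space, and the natural plan is simply to expand both sides via the inner product and match coefficients. I would start from the left-hand side and write
\[
\|ax+(1-a)y\|^{2}=\langle ax+(1-a)y,\,ax+(1-a)y\rangle
=a^{2}\|x\|^{2}+2a(1-a)\langle x,y\rangle+(1-a)^{2}\|y\|^{2},
\]
using bilinearity and symmetry of $\langle\cdot,\cdot\rangle$. This step uses no hypothesis on $a$; it is valid for every real scalar.

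Next I would expand the right-hand side by writing $\|x-y\|^{2}=\|x\|^{2}-2\langle x,y\rangle+\|y\|^{2}$, which gives
\[
a\|x\|^{2}+(1-a)\|y\|^{2}-a(1-a)\bigl(\|x\|^{2}-2\langle x,y\rangle+\|y\|^{2}\bigr).
\]
Collecting the coefficient of $\|x\|^{2}$ yields $a-a(1-a)=a^{2}$, the coefficient of $\|y\|^{2}$ yields $(1-a)-a(1-a)=(1-a)^{2}$, and the cross term is $2a(1-a)\langle x,y\rangle$. Comparing with the expansion of the left-hand side shows the two expressions coincide, which finishes the argument.

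There is no real obstacle here; the only thing to be careful about is that the identity holds for arbitrary $a\in\mathbb{R}$ (not just $a\in[0,1]$), so one should not inadvertently invoke convexity. Since the proof is a purely algebraic verification using the inner-product structure of $H$, I would simply present the two expansions and note their equality, rather than appeal to the cited reference. This identity will then serve as a basic tool for the later Fej\'er-type estimates.
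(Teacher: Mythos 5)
Your proof is correct: the expansion of both sides via bilinearity of the inner product and the coefficient comparison ($a-a(1-a)=a^{2}$, $(1-a)-a(1-a)=(1-a)^{2}$, cross term $2a(1-a)\langle x,y\rangle$) is exactly the standard argument, valid for all $a\in\mathbb{R}$. The paper itself gives no proof and simply cites [21, Corollary 2.14], where this same direct computation is carried out, so there is nothing to compare beyond noting that your verification matches the cited source.
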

\begin{lem}
([2]). Let $D$ be a nonempty closed subset of $H$, let $\Phi: [0,+\infty) \rightarrow [0,+\infty)$ be a strictly
increasing function such that $\lim_{t\rightarrow+\infty}\Phi(t)=+\infty$, and let $(x_{n})_{n\in\mathbb{N}}$ be a sequence of $H$-valued random
variables. Suppose that, for every $z\in D$, there exist $(\zeta_{n}(z))_{n\in\mathbb{N}}\in\ell_{+}^{1}(\mathfrak{X})$, $(\xi_{n}(z))_{n\in\mathbb{N}}\in\ell_{+}(\mathfrak{X})$, and $(\theta_{n}(z))_{n\in\mathbb{N}}\in\ell_{+}^{1}(\mathfrak{X})$ such that the following is satisfied P-a.s.:
$$E(\Psi(\|x_{n+1}-z\|)|\mathcal{X}_{n})+\zeta_{n}(z)\leq(1+\xi_{n}(z))\Psi(\|x_{n}-z\|)+\theta_{n}(z).\eqno{(2.2)}$$
Then the following hold:\\
(i) $\sum_{n\in\mathbb{N}}\xi_{n}(z)<+\infty$ P-a.s., $\forall z\in D$.\\
(ii) $(x_{n})_{n\in\mathbb{N}}$ is bounded P-a.s.\\
(iii) There exists $\bar{\Omega}\in\mathcal{F}$ such that $P(\bar{\Omega}) = 1$ and, for every $\omega \in \bar{\Omega}$
and every $z \in D$, $(\|x_{n}(\omega)-z\|)_{n\in\mathbb{N}}$ converges.\\
(iv) Suppose that $\mathfrak{Q}(x_{n})_{n\in\mathbb{N}}\subset D$  P-a.s. Then $(x_{n})_{n\in\mathbb{N}}$ converges weakly P-a.s. to a $D$-valued random
variable.\\
(v) Suppose that $\mathfrak{Y}(x_{n})_{n\in\mathbb{N}}\cap D\neq\emptyset$ P-a.s. Then  $(x_{n})_{n\in\mathbb{N}}$ converges strongly P-a.s. to a $D$-valued
random variable.\\
(vi) Suppose that  $\mathfrak{Y}(x_{n})_{n\in\mathbb{N}}\neq\emptyset$ P-a.s. and that $\mathfrak{Q}(x_{n})_{n\in\mathbb{N}}\subset D$ P-a.s. Then $(x_{n})_{n\in\mathbb{N}}$ converges
strongly P-a.s. to a $D$-valued random variable.

\end{lem}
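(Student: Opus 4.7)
The strategy is to recognise (2.2) as exactly the hypothesis of the stochastic Robbins--Siegmund supermartingale convergence theorem applied, for each fixed $z\in D$, to the nonnegative $\mathcal{X}_n$-adapted process $(\Psi(\|x_n-z\|))_{n\in\mathbb{N}}$, and then to pass from the resulting pointwise-in-$z$ almost-sure statements to a single full-measure event by exploiting separability of $H$.

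Fix $z\in D$. Robbins--Siegmund, together with the summability of the perturbation terms given in the hypothesis, simultaneously delivers the almost-sure summability of the left-hand subtracted sequence (yielding (i)) and the almost-sure convergence of $(\Psi(\|x_n-z\|))_{n\in\mathbb{N}}$ to a nonnegative integrable limit. Since $\Psi$ is strictly increasing with $\Psi(t)\to+\infty$, this convergence forces $(\|x_n-z\|)_{n\in\mathbb{N}}$ to be bounded P-a.s., giving (ii). For (iii) I would pick a countable dense subset $Z\subset D$ (using separability of $H$ and hence of $D$), form $\bar\Omega:=\bigcap_{z\in Z}\Omega_z$, where $\Omega_z$ is the full-measure event on which $\Psi(\|x_n-z\|)$ converges, and then upgrade from $Z$ to all of $D$ on $\bar\Omega$ by a $3\varepsilon$-type argument based on boundedness of $(x_n)$, the reverse triangle inequality, and the fact that a strictly increasing $\Psi$ is continuous off a countable set.

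The remaining three parts are then deterministic consequences applied pointwise on $\bar\Omega$, in the spirit of Opial's lemma. For (iv): boundedness together with convergence of $\|x_n-z\|$ for every $z\in D$ and the assumption $\mathfrak{Q}(x_n)_{n\in\mathbb{N}}\subset D$ forces a unique weak cluster point; a routine measurability check ensures the weak limit is a $D$-valued random variable. For (v): any $z\in\mathfrak{Y}(x_n)_{n\in\mathbb{N}}\cap D$ has $\|x_n-z\|$ convergent, and a strong subsequence converging to $z$ pins this limit at $0$, giving strong convergence to $z$. Part (vi) is the combination: weak cluster points lie in $D$ by hypothesis, so $\|x_n-z\|$ converges for each such $z$, and the presence of a strong cluster point forces this limit to be $0$.

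I expect the main obstacle to be the null-set bookkeeping in part (iii), where (2.2) holds only on a $z$-dependent full-measure event and $D$ may be uncountable. The separability reduction, together with the handling of possible discontinuities of $\Psi$ when transferring convergence of $\Psi(\|x_n-z\|)$ to convergence of $\|x_n-z\|$, requires some care, but once a single $\bar\Omega$ is secured the Opial-style arguments producing (iv)--(vi) are comparatively routine.
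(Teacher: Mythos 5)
The paper offers no proof of this lemma: it is quoted (with some transcription slips) from [2, Proposition~2.3] and simply cited, so there is nothing internal to compare your argument against. That said, your route --- Robbins--Siegmund applied to the adapted nonnegative process $\Psi(\|x_n-z\|)$ for each fixed $z$, then a countable dense subset $Z\subset D$ and a single full-measure event $\bar{\Omega}$ via separability, then pointwise Opial-type arguments for (iv)--(vi) --- is exactly the proof given in [2], so on the mathematics you are on the right track. One small simplification: you do not need any continuity of $\Psi$ when passing from convergence of $\Psi(\|x_n-z\|)$ to convergence of $\|x_n-z\|$; strict monotonicity alone suffices, since if $a<c<b$ were two cluster values of the bounded sequence $\|x_n-z\|$ then the limit of $\Psi(\|x_n-z\|)$ would have to equal $\Psi(c)$ for every $c\in(a,b)$, contradicting strict increase.

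There is, however, one point where your sketch does not prove the statement as printed, and you should flag it rather than paper over it: the transcription here attaches the summability classes to the wrong sequences. Robbins--Siegmund requires the multiplicative perturbation $(\xi_n(z))_{n\in\mathbb{N}}$ to be summable as a \emph{hypothesis} and returns summability of the subtracted term $(\zeta_n(z))_{n\in\mathbb{N}}$ as a \emph{conclusion}; in [2] the hypotheses are indeed $(\xi_n(z))_{n\in\mathbb{N}}\in\ell_+^1(\mathfrak{X})$ and $(\zeta_n(z))_{n\in\mathbb{N}}\in\ell_+(\mathfrak{X})$, and conclusion (i) reads $\sum_{n}\zeta_n(z)<+\infty$. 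As printed above, $(\xi_n(z))_{n\in\mathbb{N}}$ is only assumed to lie in $\ell_+(\mathfrak{X})$ and (i) claims $\sum_n\xi_n(z)<+\infty$, which is false in general (take $\xi_n\equiv 1$, $\zeta_n=\theta_n=0$, $x_n\equiv x_0$: inequality (2.2) holds but $\sum_n\xi_n=+\infty$). Your phrase ``summability of the left-hand subtracted sequence (yielding (i))'' silently substitutes the corrected statement; that is the right mathematics, but you are proving [2, Proposition~2.3], not the lemma as it appears on the page, and without $\sum_n\xi_n(z)<+\infty$ as an input the Robbins--Siegmund step you invoke does not even get started.
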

\begin{lem}
(Opial). Let $C$ be a nonempty set of $H$ and $(x_{n})_{n\in\mathbb{N}}$ be a sequence in $H$ such that the following two conditions hold:\\
(a) for every $x\in C$; $\lim_{n\rightarrow+\infty}\|x_{n}-x\|$ exists.\\
(b) every sequential weak cluster point of $(x_{n})_{n\in\mathbb{N}}$ is in $C$.\\
Then $(x_{n})_{n\in\mathbb{N}}$ converges weakly to a point in $C$.

\end{lem}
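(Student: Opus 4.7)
The plan is to show that $(x_{n})_{n\in\mathbb{N}}$ has exactly one weak sequential cluster point and that this point lies in $C$; then a standard subsequence-of-subsequence argument forces the entire sequence to converge weakly to it. The three ingredients I need are boundedness, existence of a weak cluster point in $C$, and uniqueness of weak cluster points.

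I would first observe that (a) immediately yields boundedness: fixing any $z\in C$, the real sequence $\|x_{n}-z\|$ converges, hence is bounded, so $(x_{n})_{n\in\mathbb{N}}$ is bounded in $H$. Since $H$ is a Hilbert space, bounded sequences admit weakly convergent subsequences, so at least one weak cluster point exists, and by (b) every such cluster point belongs to $C$.

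The core step is uniqueness. Suppose two subsequences satisfy $x_{n_{k}}\rightharpoonup u$ and $x_{m_{k}}\rightharpoonup v$ with $u,v\in C$. I would exploit the polarization-type identity
$$\|x_{n}-u\|^{2}-\|x_{n}-v\|^{2}=2\langle v-u,\,x_{n}\rangle+\|u\|^{2}-\|v\|^{2}.$$
Hypothesis (a) forces each term on the left to converge as $n\to+\infty$, so the scalar sequence $\langle v-u,\,x_{n}\rangle$ must converge. Taking the limit along $(x_{n_{k}})$ and along $(x_{m_{k}})$ and equating the two values yields $\langle v-u,u\rangle=\langle v-u,v\rangle$, i.e. $\|u-v\|^{2}=0$, hence $u=v$.

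Putting the three pieces together, the bounded sequence $(x_{n})_{n\in\mathbb{N}}$ has a unique weak cluster point $\bar{x}\in C$, so every weakly convergent subsequence has the same limit $\bar{x}$, and therefore $x_{n}\rightharpoonup\bar{x}$. There is no serious obstacle here — this is the classical Opial argument — the only subtlety is picking the right identity to convert the information in (a) into a scalar convergence statement capable of separating two candidate cluster points; the polarization identity above is exactly that tool.
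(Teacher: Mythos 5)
Your proof is correct and is the standard argument for Opial's lemma: boundedness from (a), weak sequential compactness of bounded sets in a Hilbert space, condition (b) to place cluster points in $C$, and the identity $\|x_{n}-u\|^{2}-\|x_{n}-v\|^{2}=2\langle v-u,x_{n}\rangle+\|u\|^{2}-\|v\|^{2}$ combined with (a) to force uniqueness of the weak cluster point. The paper states this lemma without proof, citing it as a classical result, so there is nothing to contrast; your argument is exactly the expected one.
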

\begin{lem}
([20, Theorem 3]). We denote by $\mathcal{A}(H,\beta)$ the set of $\beta$-averaged operators on $H$. Let $\beta_{1} \in (0, 1)$, $\beta_{2} \in (0, 1)$, $T_{1}\in\mathcal{A}(H,\beta_{1})$, and $T_{2}\in\mathcal{A}(H,\beta_{2})$. Then $T_{1}\circ T_{2}\in\mathcal{A}(H,\beta')$, where
$$\beta'=\frac{\beta_{1}+\beta_{2}-2\beta_{1}\beta_{2}}{1-\beta_{1}\beta_{2}}.$$

\end{lem}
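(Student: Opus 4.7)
My plan is to use the standard quantitative characterization of averagedness, namely that an operator $T$ on $H$ lies in $\mathcal{A}(H,\beta)$ with $\beta\in(0,1)$ if and only if, for every $x,y\in H$,
\begin{equation*}
\|Tx-Ty\|^{2}\;\leq\;\|x-y\|^{2}-\frac{1-\beta}{\beta}\,\|(I-T)x-(I-T)y\|^{2}.
\end{equation*}
First, I would apply this inequality to $T_{2}$ at the pair $(x,y)$ and to $T_{1}$ at the pair $(T_{2}x,T_{2}y)$, then chain the two estimates to obtain
\begin{equation*}
\|T_{1}T_{2}x-T_{1}T_{2}y\|^{2}\;\leq\;\|x-y\|^{2}-\frac{1-\beta_{2}}{\beta_{2}}\,\|u\|^{2}-\frac{1-\beta_{1}}{\beta_{1}}\,\|v\|^{2},
\end{equation*}
where $u:=(I-T_{2})x-(I-T_{2})y$ and $v:=(I-T_{1})T_{2}x-(I-T_{1})T_{2}y$.

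Next I would exploit the telescoping identity $(I-T_{1}T_{2})x-(I-T_{1}T_{2})y=u+v$, which turns the task into establishing an inequality of the form
\begin{equation*}
\frac{1-\beta_{2}}{\beta_{2}}\,\|u\|^{2}+\frac{1-\beta_{1}}{\beta_{1}}\,\|v\|^{2}\;\geq\;c\,\|u+v\|^{2}
\end{equation*}
with the largest admissible constant $c$. A short optimization argument (minimize $a\|u\|^{2}+b\|v\|^{2}$ on $\{u+v=w\}$ with $\|w\|=1$, reducing by projection to the one-dimensional case) yields the sharp value $c=ab/(a+b)$ with $a=(1-\beta_{2})/\beta_{2}$ and $b=(1-\beta_{1})/\beta_{1}$; plugging in gives $c=(1-\beta_{1})(1-\beta_{2})/(\beta_{1}+\beta_{2}-2\beta_{1}\beta_{2})$.

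Finally I would verify the algebraic identity $c=(1-\beta')/\beta'$ for $\beta'=(\beta_{1}+\beta_{2}-2\beta_{1}\beta_{2})/(1-\beta_{1}\beta_{2})$, and also check $\beta'\in(0,1)$ via the factorizations $\beta_{1}+\beta_{2}-2\beta_{1}\beta_{2}=\beta_{1}(1-\beta_{2})+\beta_{2}(1-\beta_{1})$ and $1-\beta_{1}-\beta_{2}+\beta_{1}\beta_{2}=(1-\beta_{1})(1-\beta_{2})$. Substituting $c=(1-\beta')/\beta'$ back into the chained estimate gives exactly the averagedness inequality for $T_{1}T_{2}$ with constant $\beta'$, completing the proof. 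The only genuinely non-routine step is the sharp weighted arithmetic inequality for $\|u+v\|^{2}$; the rest is bookkeeping and an algebraic identity.
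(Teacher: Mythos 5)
Your proof is correct. Note first that the paper itself offers no proof of Lemma 2.4: it is stated as a citation of [20, Theorem 3], so there is nothing internal to compare against. Your argument supplies a genuine, self-contained proof, and it is in fact the standard route to this result (it is essentially how the cited source and [21, Proposition 4.32] argue): chain the two quantitative averagedness inequalities of Proposition 3.4, use the telescoping identity $(I-T_{1}T_{2})x-(I-T_{1}T_{2})y=u+v$, and then apply the sharp weighted inequality $a\|u\|^{2}+b\|v\|^{2}\geq\frac{ab}{a+b}\|u+v\|^{2}$ with $a=(1-\beta_{2})/\beta_{2}$, $b=(1-\beta_{1})/\beta_{1}$. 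I checked the three computational steps: the projection argument does give the sharp constant $c=ab/(a+b)$ (the minimizer is $u=\frac{b}{a+b}w$, $v=\frac{a}{a+b}w$); the algebra $c=(1-\beta_{1})(1-\beta_{2})/(\beta_{1}+\beta_{2}-2\beta_{1}\beta_{2})=(1-\beta')/\beta'$ is right, using $1-\beta'=(1-\beta_{1})(1-\beta_{2})/(1-\beta_{1}\beta_{2})$; and $\beta'\in(0,1)$ follows from the two factorizations you give. The only point worth making explicit is that the "if" direction of the characterization in Proposition 3.4 presupposes the operator is nonexpansive, so you should remark that $T_{1}\circ T_{2}$ is nonexpansive as a composition of nonexpansive maps (or observe that your final inequality already implies it, since the subtracted term is nonnegative). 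With that one sentence added, the proof is complete.
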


\begin{thm}
Let $C$ be a nonempty closed affine subset of $H$, let $(\lambda_{n})_{n\in\mathbb{N}}$ be a sequence in $(0, 1]$, and let
$(t_{n})_{n\in\mathbb{N}}$ and $(x_{n})_{n\in\mathbb{N}}$ be sequences of $C$-valued random variables. Suppose that the following
hold:\\
(i) $w_{n}=x_{n}+\alpha_{n}(x_{n}-x_{n-1})$, $x_{n+1}=w_{n}+\lambda_{n}(t_{n}-w_{n})$, $\forall n\in\mathbb{N}$.\\
(ii) $x_{0}$, $x_{1}$ are arbitrarily chosen in $C$, $(\alpha_{n})_{n\geq1}$ is nondecreasing with $\alpha_{1}=0$ and $0\leq\alpha_{n}\leq\alpha<1$, $\forall n\geq1$ and $\lambda, \tau, \delta>0$ are such that\\
$\delta>\frac{\alpha^{2}(1+\alpha)+\alpha\tau}{1-\alpha^{2}}$ and $0<\lambda\leq\lambda_{n}\leq\frac{\delta-\alpha[\alpha(1+\alpha)+\alpha\delta+\tau]}{\delta[1+\alpha(1+\alpha)+\alpha\delta+\tau]}$, $\forall n\geq1$.\\
(iii) $\forall n\in\mathbb{N}$, $E(\|t_{n}-y\|^{2}|\mathcal{X}_{n})\leq\|w_{n}-y\|^{2}$ is satisfied
P-a.s.\\
Then
 \par
 $\sum_{n\in\mathbb{N}}E(\|t_{n}-w_{n}\|^{2}|\mathcal{X}_{n})<+\infty$  P-a.s.\\
Moreover, assuming that:\\
(iv) $\mathfrak{Q}(x_{n})_{n\in\mathbb{N}}\subset C$ P-a.s.
Then $(x_{n})_{n\in\mathbb{N}}$  converges weakly P-a.s. to a $C$-valued random variable $\hat{x}$. Furthermore, if\\
(v) $\mathfrak{Y}(x_{n})_{n\in\mathbb{N}}\neq\emptyset$ P-a.s.,\\
then $(x_{n})_{n\in\mathbb{N}}$ converges strongly P-a.s. to $\hat{x}$.
\end{thm}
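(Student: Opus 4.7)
\medskip
\noindent\textbf{Proof plan.}
The plan is to recast the inertial stochastic iteration as a stochastic quasi-Fej\'er recursion with a Lyapunov-type sequence, apply Lemma 2.2 to extract the summability statement, and then pass to weak/strong convergence through Lemmas 2.2 and 2.3. Throughout, I fix $y\in C$ and write $\phi_n=\|x_n-y\|^2$. Note that $w_n$ is $\mathcal{X}_n$-measurable, since it is built from $x_n$ and $x_{n-1}$.

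First I would establish the one-step stochastic identity. Writing $x_{n+1}=(1-\lambda_n)w_n+\lambda_n t_n$ and applying the quadratic identity (2.1) with $a=1-\lambda_n$, then taking $E(\,\cdot\,|\mathcal{X}_n)$ and using hypothesis (iii), I obtain
\[
E(\phi_{n+1}\mid\mathcal{X}_n)\le \|w_n-y\|^2-\lambda_n(1-\lambda_n)\,E(\|t_n-w_n\|^2\mid\mathcal{X}_n).
\]
Next, from $w_n=(1+\alpha_n)x_n-\alpha_n x_{n-1}$, a second application of (2.1) with $a=1+\alpha_n$ (note $C$ affine, so $w_n\in C$) gives
\[
\|w_n-y\|^2=(1+\alpha_n)\phi_n-\alpha_n\phi_{n-1}+\alpha_n(1+\alpha_n)\|x_n-x_{n-1}\|^2 .
\]
Combining yields the basic stochastic inertial inequality
\[
E(\phi_{n+1}\mid\mathcal{X}_n)\le \phi_n+\alpha_n(\phi_n-\phi_{n-1})+\alpha_n(1+\alpha_n)\|x_n-x_{n-1}\|^2-\lambda_n(1-\lambda_n)\,E(\|t_n-w_n\|^2\mid\mathcal{X}_n).
\]

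Second, I would control the spurious term $\|x_n-x_{n-1}\|^2$ by unfolding $x_n-x_{n-1}=\alpha_{n-1}(x_{n-1}-x_{n-2})+\lambda_{n-1}(t_{n-1}-w_{n-1})$, so that after a Cauchy--Schwarz/Young splitting (with weight $\tau$, matching the constant appearing in (ii)) and taking conditional expectations, $E(\|x_n-x_{n-1}\|^2\mid\mathcal{X}_{n-1})$ is dominated by a multiple of $\|x_{n-1}-x_{n-2}\|^2$ plus a multiple of $E(\|t_{n-1}-w_{n-1}\|^2\mid\mathcal{X}_{n-1})$. This is where the parameters $\alpha,\tau,\delta$ enter. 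Then I would introduce the Lyapunov functional
\[
V_n=\phi_n-\alpha_n\phi_{n-1}+\delta\,\|x_n-x_{n-1}\|^2,
\]
and combine the two displays above to obtain, P-a.s.,
\[
E(V_{n+1}\mid\mathcal{X}_n)\le V_n-\rho_n\,E(\|t_n-w_n\|^2\mid\mathcal{X}_n),
\]
where $\rho_n>0$ under the prescribed bounds on $\lambda_n$, $\alpha$, $\tau$ and $\delta$. The precise algebra showing $\rho_n\ge \rho>0$ is exactly what the inequality $\delta>\tfrac{\alpha^2(1+\alpha)+\alpha\tau}{1-\alpha^2}$ and the upper bound on $\lambda_n$ are tailored to enforce; this is the main obstacle and the only place where the somewhat mysterious constants of hypothesis (ii) play a role.

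Third, once the supermartingale-type bound on $V_n$ is in hand, I would invoke the Robbins--Siegmund argument underlying Lemma 2.2 to conclude that $V_n$ converges P-a.s.\ and
\[
\sum_{n\in\mathbb{N}} E(\|t_n-w_n\|^2\mid\mathcal{X}_n)<+\infty\quad\text{P-a.s.},
\]
which is the first conclusion. Summability also forces $\|x_n-x_{n-1}\|\to 0$ P-a.s., whence $\alpha_n(\phi_n-\phi_{n-1})\to 0$ and the convergence of $V_n$ transfers to convergence of $\phi_n=\|x_n-y\|^2$ for every $y\in C$ on a common full-measure event (using the separability of $H$ to choose a countable dense $D\subset C$).

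Finally, with assumption (iv) I would deduce that every weak cluster point of $(x_n)$ lies P-a.s.\ in $C$, and invoke a stochastic Opial-type argument (Lemma 2.3, now applied pointwise on the full-measure event produced above) to conclude weak almost sure convergence to a $C$-valued random variable $\hat x$. Under the additional assumption (v), the existence of a strong cluster point combined with the already-established convergence of $\|x_n-y\|$ for each $y\in C$ upgrades weak to strong convergence, exactly as in part (v) of Lemma 2.2. This completes the plan; the genuinely delicate step is the second one, namely identifying the correct Lyapunov functional and verifying positivity of $\rho_n$ from the hypotheses on $\alpha$, $\tau$, $\delta$ and $\lambda_n$.
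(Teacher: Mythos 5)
Your overall architecture coincides with the paper's: both rest on two applications of identity (2.1) to obtain
\[
E(\|x_{n+1}-y\|^{2}\mid\mathcal{X}_{n})\le(1+\alpha_{n})\|x_{n}-y\|^{2}-\alpha_{n}\|x_{n-1}-y\|^{2}+\alpha_{n}(1+\alpha_{n})\|x_{n}-x_{n-1}\|^{2}-\lambda_{n}(1-\lambda_{n})E(\|t_{n}-w_{n}\|^{2}\mid\mathcal{X}_{n}),
\]
followed by a Lyapunov functional of the form $\|x_{n}-y\|^{2}-\alpha_{n}\|x_{n-1}-y\|^{2}+(\mathrm{const})\,\|x_{n}-x_{n-1}\|^{2}$ in the style of Bo\c{t}--Csetnek--Hendrich [1], summability of the increments, and then Lemma 2.2 and an Opial argument for the convergence claims. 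So the route is essentially the paper's.

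The genuine gap is that the one step where the hypotheses on $\alpha,\tau,\delta,\lambda_{n}$ actually enter --- the verification that your $\rho_{n}$ is bounded below by a positive constant --- is explicitly deferred, and your proposed splitting is not the one those constants are calibrated for. The paper (following [1]) does not unfold $x_{n}-x_{n-1}$ backward; it bounds $E(\|t_{n}-w_{n}\|^{2}\mid\mathcal{X}_{n})$ from below by expanding $t_{n}-w_{n}=\lambda_{n}^{-1}(x_{n+1}-x_{n})+\alpha_{n}\lambda_{n}^{-1}(x_{n-1}-x_{n})$ and applying Young's inequality with the specific weight $\upsilon_{n}=1/(\alpha_{n}+\delta\lambda_{n})$, which makes the coefficient of $\|x_{n}-x_{n-1}\|^{2}$ equal to $\mu_{n}=\alpha_{n}(1+\alpha_{n})+\alpha_{n}(1-\lambda_{n})\delta\le\alpha(1+\alpha)+\alpha\delta$; in that scheme $\tau$ is not a Young weight but the guaranteed per-step decrease coefficient of the Lyapunov quantity (one shows $\eta_{n+1}\le\eta_{n}-\tau\|x_{n+1}-x_{n}\|^{2}$), and the lower bound on $\delta$ together with the upper bound on $\lambda_{n}$ are reverse-engineered precisely to produce that. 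With your backward unfolding and $\tau$ recycled as a Young parameter, there is no reason the inequalities stated in (ii) deliver $\rho_{n}\ge\rho>0$; you would have to redo that algebra for your splitting (possibly arriving at different admissible parameter ranges) or switch to the forward splitting. A smaller point in your favor: your endgame --- convergence of $\|x_{n}-y\|$ obtained directly from the Lyapunov recursion and a pointwise Opial argument --- is cleaner than the paper's Step 2, which feeds $\theta_{n}(y)=\alpha_{n}^{2}\|x_{n}-x_{n-1}\|^{2}+2\alpha_{n}\|x_{n}-y\|\,\|x_{n}-x_{n-1}\|$ into Lemma 2.2 even though summability of $\|x_{n}-x_{n-1}\|^{2}$ alone does not make the cross term summable.
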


\begin{proof}
We proceed with the following steps.\\
Step 1. Because of the choice of $\delta$, $\lambda_{n}\in(0,1) \forall n\geq1$. Moreover, since $C$ is affine, we can know that the iterative scheme provides a well-defined sequence in $C$. Let $y\in C$ and $n\geq1$.
From (2.1) and (iii), we can know that
\begin{align*}
\|x_{n+1}-y\|^{2}&=\|(1-\lambda_{n})w_{n}+\lambda_{n}t_{n}-y\|^{2}\\
&=\|(1-\lambda_{n})(w_{n}-y)+\lambda_{n}(t_{n}-y)\|^{2}\\
&=(1-\lambda_{n})\|w_{n}-y\|^{2}+\lambda_{n}\|t_{n}-y\|^{2}-\lambda_{n}(1-\lambda_{n})\|t_{n}-w_{n}\|^{2}\\
&\leq\|w_{n}-y\|^{2}-\lambda_{n}(1-\lambda_{n})\|t_{n}-w_{n}\|^{2}.\tag{2.3}
\end{align*}
Using (2.1) again, we obtain
\begin{align*}
\|w_{n}-y\|^{2}&=\|(1+\alpha_{n})(x_{n}-y)-\alpha_{n}(x_{n-1}-y)\|^{2}\\
&=(1+\alpha_{n})\|x_{n}-y\|^{2}-\alpha_{n}\|x_{n-1}-y\|^{2}+\alpha_{n}(1+\alpha_{n})\|x_{n}-x_{n-1}\|^{2}.\tag{2.4}
\end{align*}
So from (2.3), we have
\begin{align*}
(\forall n\geq1)&\mathrm{E}(\|x_{n+1}-y\|^{2}|\mathcal{X}_{n})-(1+\alpha_{n})\|x_{n}-y\|^{2}+\alpha_{n}\|x_{n-1}-y\|^{2}\\
&\leq-\lambda_{n}(1-\lambda_{n})\mathrm{E}(\|t_{n}-w_{n}\|^{2}|\mathcal{X}_{n})+\alpha_{n}(1+\alpha_{n})\|x_{n}-x_{n-1}\|^{2}.\tag{2.5}
\end{align*}
Furthermore, we have
\begin{align*}
\mathrm{E}(\|t_{n}-w_{n}\|^{2}|\mathcal{X}_{n})&=\|\frac{1}{\lambda_{n}}(x_{n+1}-x_{n})+\frac{\alpha_{n}}{\lambda_{n}}(x_{n-1}-x_{n})\|^{2}\\
&=\frac{1}{\lambda_{n}^{2}}\|x_{n+1}-x_{n}\|^{2}+\frac{\alpha_{n}^{2}}{\lambda_{n}^{2}}\|x_{n-1}-x_{n}\|^{2}+2\frac{\alpha_{n}}{\lambda_{n}^{2}}\langle x_{n+1}-x_{n},x_{n-1}-x_{n}\rangle\\
&\geq\frac{1}{\lambda_{n}^{2}}\|x_{n+1}-x_{n}\|^{2}+\frac{\alpha_{n}^{2}}{\lambda_{n}^{2}}\|x_{n-1}-x_{n}\|^{2}\\
&+\frac{\alpha_{n}}{\lambda_{n}^{2}}(-\upsilon_{n}\|x_{n+1}-x_{n}\|^{2}-\frac{1}{\upsilon_{n}}\|x_{n-1}-x_{n}\|^{2})\\
&=\frac{1-\alpha_{n}\upsilon_{n}}{\lambda_{n}^{2}}\|x_{n+1}-x_{n}\|^{2}+\frac{\alpha_{n}^{2}\upsilon_{n}-\alpha_{n}}{\lambda_{n}^{2}\upsilon_{n}}\|x_{n-1}-x_{n}\|^{2},\tag{2.6}
\end{align*}
where $\upsilon_{n}=\frac{1}{\alpha_{n}+\delta\lambda_{n}}$.\\
Put (2.6) into (2.5), we have
\begin{align*}
&\mathrm{E}(\|x_{n+1}-y\|^{2}|\mathcal{X}_{n})-(1+\alpha_{n})\|x_{n}-y\|^{2}+\alpha_{n}\|x_{n-1}-y\|^{2}\\
&\leq\frac{(1-\lambda_{n})(\alpha_{n}\upsilon_{n}-1)}{\lambda_{n}}\|x_{n+1}-x_{n}\|^{2}+\mu_{n}\|x_{n}-x_{n-1}\|^{2},\tag{2.7}
\end{align*}
where
$$\mu_{n}=\alpha_{n}(1+\alpha_{n})+\alpha_{n}(1-\lambda_{n})\frac{1-\upsilon_{n}\alpha_{n}}{\upsilon_{n}\lambda_{n}}>0,\eqno{(2.8)}$$
since $\upsilon_{n}\alpha_{n} < 1$ and $\lambda_{n}\in(0,1)$.\\
Now, we consider the choice of $\upsilon_{n}$ and set
$$\delta=\frac{1-\upsilon_{n}\alpha_{n}}{\upsilon_{n}\lambda_{n}}.$$
Therefore, from (2.8) we can know taht
$$\mu_{n}=\alpha_{n}(1+\alpha_{n})+\alpha_{n}(1-\lambda_{n})\delta\leq\alpha(1+\alpha)+\alpha\delta ~~~~~\forall n\geq1. \eqno{(2.9)}$$
In the following, we set $\psi_{n}=\|x_{n}-y\|^{2}$, $\forall n\in \mathbb{N}$ and $\eta_{n+1}=\mathrm{E}(\psi_{n+1}|\mathcal{X}_{n})-\alpha_{n+1}\psi_{n}+\mu_{n+1}\|x_{n+1}-x_{n}\|^{2}$, $\forall n\geq1$.
With the same proof of [1], we can obtain that
$$\sum_{n\in\mathbb{N}}\|x_{n+1}-x_{n}\|^{2}<+\infty.\eqno{(2.10)}$$
We have proven above that for an arbitrary $y\in C$ the
inequality (2.7) is true. By Step 1, (2.9) and Lemma 2.2 we derive that exists $\Omega \in C$ such that $P(\Omega) = 1$ and, for every $\omega \in \Omega$
and every $y \in C$ $\lim_{n\rightarrow\infty}\|x_{n}(\omega)-y\|$ exists (in (2.7), we still consider that $\upsilon_{n}\alpha_{n}<1$, $\forall n\geq1$).
On the other hand, from (i) we have
\begin{align*}
(\forall n\geq1)\mathrm{E}(\|t_{n}-w_{n}\||\mathcal{X}_{n})&=\frac{1}{\lambda_{n}}\mathrm{E}(\|x_{n+1}-w_{n}\||\mathcal{X}_{n})\\
&\leq\frac{1}{\lambda}\mathrm{E}(\|x_{n+1}-w_{n}\||\mathcal{X}_{n})\\
&\leq\frac{1}{\lambda}(\|x_{n+1}-x_{n}\|+\alpha\|x_{n}-x_{n-1}\|),\tag{2.11}
\end{align*}
 therefore, by (2.10) we have \\
 $\sum_{n\in\mathbb{N}}E(\|t_{n}-w_{n}\|^{2}|\mathcal{X}_{n})<+\infty$  P-a.s.\\
Step 2. We will prove the  convergence of $(x_{n})_{n\in\mathbb{N}}$. \\
By the definition of $w_{n}$, we can know that
\begin{align*}
\|w_{n}-y\|^{2}&=\|(x_{n}-y)-\alpha_{n}(x_{n}-x_{n-1})\|^{2}\\
&\leq\|x_{n}-y\|^{2}+\alpha_{n}^{2}\|x_{n}-x_{n-1}\|^{2}+2\alpha_{n}\|x_{n}-y\|\|x_{n}-x_{n-1}\|.\tag{2.12}
\end{align*}
Put (2.12) into (2.3), we have
\begin{align*}
\|x_{n+1}-y\|^{2}&\leq\|x_{n}-y\|^{2}+\alpha_{n}^{2}\|x_{n}-x_{n-1}\|^{2}+2\alpha_{n}\|x_{n}-y\|\|x_{n}-x_{n-1}\|\\
&-\lambda_{n}(1-\lambda_{n})\|t_{n}-w_{n}\|^{2}.\tag{2.13}
\end{align*}
Set
 $$\xi_{n}(y)=\lambda_{n}(1-\lambda_{n})\|t_{n}-w_{n}\|^{2}$$
and
$$\theta_{n}(y)=\alpha_{n}^{2}\|x_{n}-x_{n-1}\|^{2}+2\alpha_{n}\|x_{n}-y\|\|x_{n}-x_{n-1}\|.$$
So, from (iv) and Lemma 2.2 (iv) applied with $\Psi :t\rightarrow t^{2}$, we obtain that $(x_{n})_{n\in\mathbb{N}}$  converges weakly P-a.s. to a $C$-valued random variable $\hat{x}$.
Likewise, from (iv)-(v) and Lemma 2.2 (vi) applied with $\Psi :t\rightarrow t^{2}$, we obtain the strong
convergence of $(x_{n})_{n\in\mathbb{N}}$.
\end{proof}
\begin{defn}
 An operator $T: H \rightarrow H$ is nonexpansive, if the following inequality holds for any $x,y\in H$:
$$\|Tx-Ty\|\leq\|x-y\|.$$
A point $x\in H$ is a fixed point of $T$ provided $Tx = x$. Denote by $Fix(T)$ the set of fixed points of
$T$, that is, $Fix(T)=\{x\in H, Tx = x\}$. Let $(x_{n})_{n\in\mathbb{N}}$ be a sequence in $ H$ and $z\in H$ such that $x_{n}\rightharpoonup z$ and $Tx_{n}-x_{n}\rightharpoonup 0$ as $n\rightarrow+\infty$. Then $z\in Fix(T)$. This is called $T$ is demicompact at  $z\in H$ [21, Corollary 4.18].
\end{defn}
By Theorem 2.1, we can obtain the following Corollary.
\begin{cor}
Let $T: H \rightarrow H$ be a nonexpansive operator such that $Fix(T)\neq\emptyset$, and $(\lambda_{n})_{n\in\mathbb{N}}$ be a sequence in $[0, 1]$.
Let $x_{0}, x_{1}$ be $H$-valued random variables which are  chosen arbitrarily. For $n\geq0$:
$$
 \left\{
\begin{array}{l}
w_{n}=x_{n}+\alpha_{n}(x_{n}-x_{n-1}),\\
x_{n+1}=x_{n}+\lambda_{n}(Tx_{n}-x_{n}).
\end{array}
\right.
$$
\end{cor}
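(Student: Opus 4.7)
The plan is to obtain this corollary as a direct specialization of Theorem~2.1. I read the scheme in its inertial Krasnosel'ski\u{\i}--Mann form $x_{n+1}=w_n+\lambda_n(Tw_n-w_n)$ and set $t_n:=Tw_n$, which matches line (i) of Theorem~2.1 with the identification of the operator step. I take the affine set $C:=H$, so the iterates are automatically $C$-valued and line (i) of Theorem~2.1 is literally satisfied; the parameter hypothesis (ii) on $(\alpha_n)$, $(\lambda_n)$, $\alpha$, $\tau$, $\delta$ is inherited from the standing assumptions of Theorem~2.1, and $x_0, x_1\in H$ suffices for the initialisation.

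Hypothesis (iii) of Theorem~2.1 will be checked against reference points in $\mathrm{Fix}(T)$, which is nonempty by assumption. For any fixed $y\in\mathrm{Fix}(T)$, nonexpansiveness gives
$$\|t_n-y\|=\|Tw_n-Ty\|\le\|w_n-y\|$$
pointwise, so the conditional expectation inequality in (iii) is immediate. Invoking the first conclusion of Theorem~2.1 then yields $\sum_{n}E(\|t_n-w_n\|^2\mid\mathcal{X}_n)<+\infty$ P-a.s., and, together with estimate (2.10) established inside that proof, this gives $\|x_{n+1}-x_n\|\to 0$, hence $\|w_n-x_n\|=\alpha_n\|x_n-x_{n-1}\|\to 0$ (since $\alpha_n\le\alpha<1$), and therefore $\|Tw_n-w_n\|=\|t_n-w_n\|\to 0$ P-a.s.

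The main remaining step, and the one place where a genuine argument is needed rather than a transcription, is verifying hypothesis (iv) of Theorem~2.1, namely $\mathfrak{Q}(x_n)_{n\in\mathbb{N}}\subset\mathrm{Fix}(T)$ P-a.s. On a probability-one set of $\omega$'s, any weak cluster point $\bar{x}$ of $(x_n(\omega))$ is also a weak cluster point of $(w_n(\omega))$, because $w_n-x_n\to 0$; applying the classical demiclosedness-at-zero principle for the nonexpansive operator $I-T$, as formulated in Definition~2.1, to the subsequence $(w_{n_k}(\omega))$ with $Tw_{n_k}-w_{n_k}\to 0$ forces $T\bar{x}=\bar{x}$, so $\bar{x}\in\mathrm{Fix}(T)$. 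Weak P-a.s.\ convergence to a $\mathrm{Fix}(T)$-valued random variable $\hat{x}$ then follows from Theorem~2.1(iv), and under the additional cluster-point assumption $\mathfrak{Y}(x_n)_{n\in\mathbb{N}}\ne\emptyset$ P-a.s.\ the strong convergence to the same limit follows from Theorem~2.1(v). The demiclosedness step is the only non-routine ingredient; the rest is a direct application of Theorem~2.1 in this simplified single-operator setting.
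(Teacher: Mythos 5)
Your proposal is correct and follows essentially the same route as the paper's own proof: set $t_n=Tw_n$, verify hypotheses (i)--(iii) of Theorem~2.1 via nonexpansiveness, deduce $Tw_n-w_n\to 0$ P-a.s.\ from the summability conclusion together with (2.10), and then use demiclosedness of $I-T$ at $0$ (the paper's Definition~2.1) applied to the shifted sequence $(w_{n_k})$ to place weak cluster points in $\mathrm{Fix}(T)$. The only cosmetic difference is that you close the argument by citing Theorem~2.1(iv)--(v) while the paper invokes Opial's lemma directly after establishing that $\lim_n\|x_n(\omega)-y\|$ exists; these are the same argument.
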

Then the following hold:\\
(i) $(x_{n})_{n\in\mathbb{N}}$ converges weakly P-a.s. to a $Fix (T)$-valued random variable.\\
(ii) Suppose that $T$ is demicompact at 0 (see Definition 2.1). Then  $(x_{n})_{n\in\mathbb{N}}$ converges strongly P-a.s.
to a $Fix( T)$-valued random variable.

\begin{proof}
Set $D=Fix(T)$. Since $T$ is continuous, $T$ is measurable and $D$ is closed. Now let $y\in D$ and set
$(n\in\mathbb{N})$, $t_{n} = Tw_{n}$. Then, by the nonexpansiveness of $T$, we have for every $n\in\mathbb{N}$

$$
 \left\{
\begin{array}{l}
x_{n+1}=w_{n}+\lambda_{n}(t_{n}-w_{n}),\\
\sum_{n\in\mathbb{N}}E(\|t_{n}-w_{n}\|^{2}|\mathcal{X}_{n})=\|Tw_{n}-w_{n}\|^{2},\\
\sum_{n\in\mathbb{N}}E(\|t_{n}-y\|^{2}|\mathcal{X}_{n})=\|Tw_{n}-Ty\|^{2}\leq\|w_{n}-y\|^{2}.
\end{array}
\right.\eqno{(2.14)}
$$
It is satisfied  properties (i)-(iii) of Theorem 2.1.  Therefore, from (2.14) and the conclusion of (iii) in Theorem 2.1 , we know that exists $\bar{\Omega}\in\mathcal{F}$ such that $P(\bar{\Omega}) = 1$ and
 $$\sum_{n\in\mathbb{N}}\|Tw_{n}(\omega)-w_{n}(\omega)\|^{2}<+\infty, \forall\omega\in\bar{\Omega}.\eqno{(2.15)}$$
With the same proof of Step1 in Theorem 2.1, we can know that for
every $y \in C$, $\forall\omega\in\bar{\Omega}$, $\lim_{n\rightarrow\infty}\|x_{n}(\omega)-y\|$ exists.
On the other hand, let $x$ be a sequential weak cluster point of $(x_{n})_{n\in\mathbb{N}}$, that is, the latter has
a subsequence $(x_{n_{k}})_{k\in\mathbb{N}}$ such that $x_{n_{k}}\rightharpoonup x$ as $k\rightarrow\infty$. By (2.10) and the definition of $w_{n}$ and the upper bound for $\alpha_{n}$, we get
$w_{n_{k}}\rightharpoonup x$.
Then, by (2.15) we know there exists $\hat{\Omega}\subset\bar{\Omega}$ such that $\hat{\Omega}\in\mathcal{F}$ such that $P(\hat{\Omega}) = 1$ and, for every $\omega\in\hat{\Omega}$
$Tw_{n_{k}}(\omega)-w_{n_{k}}(\omega)\rightharpoonup 0$ as $k\rightarrow\infty$. Applying now Definition 2.1 for the sequence $(w_{n_{k}}(\omega))_{k\in\mathbb{N}}$ we conclude that
$x\in Fix(T)$. Since the two assumptions of Lemma 2.3 are verified, it follows that $(x_{n}(\omega))_{n\in\mathbb{N}}$ converges weakly to a point in $Fix(T)$.

\end{proof}

\section{A stochastic intertial block-coordinate fixed point algorithms}

\begin{nota}
Let $H_{1}, \cdots,H_{m}$ are separable real Hilbert spaces and $\mathbf{H }= H_{1} \oplus \cdots \oplus H_{m}$ be their direct
Hilbert sum. The inner products and  norms of these spaces are all denoted by $\langle\cdot,\cdot\rangle$ and $\|\cdot\|$, and $\mathbf{x}=(x_{1}, \cdots,x_{m})$ indicates a generic vector in $\mathbf{H }$.
Let $(x_{n})_{n\in\mathbb{N}}$ be a sequence of $H$-valued random variables, we set $\mathcal{X}_{n}=\sigma(x_{0}, \cdots,x_{n})$.

\end{nota}

\begin{defn}
([21]) An operator $\mathbf{T}: \mathbf{H}\rightarrow \mathbf{H}$ is quasinonexpansive if
$$\|\mathbf{T}\mathbf{x}-\mathbf{z}\|\leq\|\mathbf{x}\mathbf{-z}\|, \forall \mathbf{z}\in Fix(\mathbf{T}), \forall \mathbf{x}\in \mathbf{H}.\eqno{(3.1)}$$
\end{defn}

\begin{thm}
Let $(\forall n\in\mathbb{N})$ $\mathbf{T}_{n}: \mathbf{H}\rightarrow \mathbf{H}: \mathbf{x}\mapsto (T_{i,n}\mathbf{x})_{1\leq i\leq m}$  be a quasinonexpansive
operator where, for every $i\in\{1,\cdots,m\}$, $T_{i,n} :\mathbf{H }\rightarrow H_{i}$ is measurable. Suppose that $(\lambda_{n})_{n\in\mathbb{N}}$ is a sequence in $(0, 1)$, and set $M=\{0,1\}^{m}\backslash \{\mathbf{0}\}$. Let $\mathbf{x}_{0}, \mathbf{x}_{1}$ be  $\mathbf{H}$-valued
random variables which are arbitrarily chosen, and let $(\varepsilon_{n})_{n\in\mathbb{N}}$ be identically distributed $M$-valued random variables. For $n\geq0$,
$$
 \left\{
\begin{array}{l}
\mathbf{w}_{n}=\mathbf{x}_{n}+\alpha_{n}(\mathbf{x}_{n}-\mathbf{x}_{n-1}),\\
x_{i,n+1}=w_{i,n}+\varepsilon_{i,n}\lambda_{n}(T_{i,n}(w_{1,n},\cdots,w_{m,n})-w_{i,n}), i=\{1, \cdots,m\},
\end{array}
\right.\eqno{(3.2)}
$$
and set $\mathcal{E}_{n}=\sigma(\varepsilon_{n})$. Moreover, suppose that the following hold:\\
(i) $\mathbf{D}=\bigcap_{n\in\mathbb{N}}Fix(\mathbf{T}_{n})\neq\emptyset$.\\
(ii) For every $n\in\mathbb{N}$, $\mathcal{E}_{n}$ and $\mathcal{X}_{n}$ are independent.\\
(iii) $p_{i}=P[\varepsilon_{i,0}=1]>0$, $\forall i\in\{1, \cdots,m\}$.\\
(iv) $(\alpha_{n})_{n\geq1}$ is nondecreasing with $\alpha_{1}=0$ and $0\leq\alpha_{n}\leq\alpha<1$, $\forall n\geq1$ and $\lambda, \tau, \delta>0$ are such that
$\delta>\frac{\alpha^{2}(1+\alpha)+\alpha\tau}{1-\alpha^{2}}$ and $0<\lambda\leq\lambda_{n}\leq\frac{\delta-\alpha[\alpha(1+\alpha)+\alpha\delta+\tau]}{\delta[1+\alpha(1+\alpha)+\alpha\delta+\tau]}$, $\forall n\geq1$.\\
Then
\par
$\mathbf{T}_{n}\mathbf{x}_{n}-\mathbf{x}_{n}\rightarrow0$ P-a.s.\\
In addition, assume that:\\
(v) $\mathfrak{Q}(\mathbf{x}_{n})_{n\in\mathbb{N}}\subset \mathbf{D}$ P-a.s.
Then $(\mathbf{x}_{n})_{n\in\mathbb{N}}$  converges weakly P-a.s. to a $\mathbf{D}$-valued random variable $\mathbf{\hat{x}}$. Furthermore, if\\
(vi) $\mathfrak{Y}(\mathbf{x}_{n})_{n\in\mathbb{N}}\neq\emptyset$ P-a.s.,\\
then $(\mathbf{x}_{n})_{n\in\mathbb{N}}$ converges strongly P-a.s. to $\mathbf{\hat{x}}$.
\end{thm}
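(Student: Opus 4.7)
My plan is to reduce the randomized block-coordinate inertial iteration to a single-operator stochastic inertial quasi-Fej\'er scheme in the weighted Hilbert-sum norm $\|\mathbf{x}\|_{\pi}^{2}=\sum_{i=1}^{m}p_{i}^{-1}\|x_{i}\|^{2}$, which is equivalent to $\|\cdot\|$ and well defined by hypothesis (iii), and then to invoke Theorem 2.1 together with Lemma 2.2. The first step is to introduce the virtual full-sweep iterate $\tilde{\mathbf{t}}_{n}$ defined coordinatewise by $\tilde{t}_{i,n}=w_{i,n}+\varepsilon_{i,n}(T_{i,n}\mathbf{w}_{n}-w_{i,n})$, so that the algorithm rewrites exactly as $\mathbf{x}_{n+1}=\mathbf{w}_{n}+\lambda_{n}(\tilde{\mathbf{t}}_{n}-\mathbf{w}_{n})$, matching the iteration form of item (i) of Theorem 2.1.

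Next, fix $\mathbf{z}\in\mathbf{D}$ (nonempty by (i)). Using $\varepsilon_{i,n}^{2}=\varepsilon_{i,n}$, the independence of $\mathcal{E}_{n}$ and $\mathcal{X}_{n}$ (hypothesis (ii)) and $\mathrm{E}\,\varepsilon_{i,n}=p_{i}$, a coordinate-wise expansion yields
$$\mathrm{E}\bigl(\|\tilde{t}_{i,n}-z_{i}\|^{2}\mid\mathcal{X}_{n}\bigr)=(1-p_{i})\|w_{i,n}-z_{i}\|^{2}+p_{i}\|T_{i,n}\mathbf{w}_{n}-z_{i}\|^{2}.$$
Weighting by $p_{i}^{-1}$, summing over $i$, and applying quasinonexpansiveness of $\mathbf{T}_{n}$ at $\mathbf{z}$ then reassembles the pieces into $\|\mathbf{w}_{n}-\mathbf{z}\|_{\pi}^{2}-\|\mathbf{w}_{n}-\mathbf{z}\|^{2}+\|\mathbf{T}_{n}\mathbf{w}_{n}-\mathbf{z}\|^{2}\leq\|\mathbf{w}_{n}-\mathbf{z}\|_{\pi}^{2}$, which is exactly hypothesis (iii) of Theorem 2.1 in the $\pi$-norm.

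With this contraction in hand, I would replay the inertial quasi-Fej\'er analysis of Theorem 2.1 in the Hilbert space $(\mathbf{H},\langle\cdot,\cdot\rangle_{\pi})$: the parameter window from hypothesis (iv) is unchanged, Lemma 2.1 applies to the weighted norm, and affineness holds trivially with $C=\mathbf{H}$. This yields P-a.s.\ $\sum_{n}\|\mathbf{x}_{n+1}-\mathbf{x}_{n}\|_{\pi}^{2}<+\infty$, $\sum_{n}\mathrm{E}(\|\tilde{\mathbf{t}}_{n}-\mathbf{w}_{n}\|_{\pi}^{2}\mid\mathcal{X}_{n})<+\infty$, and a stochastic quasi-Fej\'er inequality of the form (2.2) with $\Psi(t)=t^{2}$ relative to every $\mathbf{z}\in\mathbf{D}$. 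A short companion computation based again on $\varepsilon_{i,n}^{2}=\varepsilon_{i,n}$ identifies $\mathrm{E}(\|\tilde{\mathbf{t}}_{n}-\mathbf{w}_{n}\|_{\pi}^{2}\mid\mathcal{X}_{n})=\|\mathbf{T}_{n}\mathbf{w}_{n}-\mathbf{w}_{n}\|^{2}$, so $\mathbf{T}_{n}\mathbf{w}_{n}-\mathbf{w}_{n}\to\mathbf{0}$ P-a.s.; combined with $\|\mathbf{w}_{n}-\mathbf{x}_{n}\|=\alpha_{n}\|\mathbf{x}_{n}-\mathbf{x}_{n-1}\|\to 0$ this delivers the announced $\mathbf{T}_{n}\mathbf{x}_{n}-\mathbf{x}_{n}\to\mathbf{0}$ P-a.s.

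For the convergence statements, the Fej\'er inequality above is precisely condition (2.2) of Lemma 2.2 with $D=\mathbf{D}$ and $\Psi(t)=t^{2}$, so hypothesis (v) triggers Lemma 2.2(iv) to give weak P-a.s.\ convergence of $(\mathbf{x}_{n})$ to a $\mathbf{D}$-valued random variable $\hat{\mathbf{x}}$, and hypothesis (vi) lets Lemma 2.2(vi) upgrade this to strong convergence; equivalence of $\|\cdot\|$ and $\|\cdot\|_{\pi}$ transfers both modes back to the original norm. The main technical obstacle is the choice of the weighted inner product: the rescaling by $p_{i}^{-1}$ is forced by the requirement that the $(1-p_{i})$-weighted residuals reassemble into $\|\mathbf{w}_{n}-\mathbf{z}\|_{\pi}^{2}$ while the $p_{i}$-weighted pieces collapse onto $\|\mathbf{T}_{n}\mathbf{w}_{n}-\mathbf{z}\|^{2}$, where the quasinonexpansive inequality is available. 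A secondary subtlety is the passage $\mathbf{T}_{n}\mathbf{w}_{n}-\mathbf{w}_{n}\to\mathbf{0}\Rightarrow\mathbf{T}_{n}\mathbf{x}_{n}-\mathbf{x}_{n}\to\mathbf{0}$, which for merely quasinonexpansive $\mathbf{T}_{n}$ implicitly requires a continuity property along the bounded iterates, unlike the nonexpansive Corollary 2.1 where this transfer is automatic.
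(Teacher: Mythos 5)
Your proposal follows essentially the same route as the paper: the same weighted norm $\||\mathbf{x}|\|^{2}=\sum_{i}p_{i}^{-1}\|x_{i}\|^{2}$, the same auxiliary full-sweep iterate $t_{i,n}=w_{i,n}+\varepsilon_{i,n}(T_{i,n}\mathbf{w}_{n}-w_{i,n})$ recasting (3.2) as an instance of Theorem 2.1, the same conditional-expectation contraction estimate (the paper sums over $\mathbf{s}\in M$ where you use the marginals $p_{i}$, which is equivalent), and the same identification $\mathrm{E}(\||\mathbf{t}_{n}-\mathbf{w}_{n}\||^{2}\mid\mathcal{X}_{n})=\|\mathbf{T}_{n}\mathbf{w}_{n}-\mathbf{w}_{n}\|^{2}$ followed by an appeal to the conclusions of Theorem 2.1 for the weak and strong convergence. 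Your closing caveat is well taken: the paper's proof likewise only establishes $\mathbf{T}_{n}\mathbf{w}_{n}-\mathbf{w}_{n}\to 0$ and passes silently to the stated $\mathbf{T}_{n}\mathbf{x}_{n}-\mathbf{x}_{n}\to 0$, so you have not introduced a gap that is not already present in the original argument.
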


\begin{proof}
We define the norm $\||\cdot|\|$ on $\mathbf{H}$ by
$$
\||\mathbf{x}|\|^{2}=\sum_{i=1}^{m}\frac{1}{p_{i}}\|x_{i}\|^{2},\,\,\, \forall \mathbf{x}\in\mathbf{H}.\eqno{(3.3)}
$$
We will use Theorem 2.1 in $(\mathbf{H}, \||\cdot|\|)$. For every $n\in\mathbb{N}$, set  $\mathbf{t}_{n}=(t_{i,n})_{1\leq i\leq m}$ where $t_{i,n}=w_{i,n}+\varepsilon_{i,n}(T_{i,n}\mathbf{w}_{n}-w_{i,n})$,  $\forall n\in \mathbb{N}$. Then from (3.2), we can know that
$$
(\forall n\in \mathbb{N}) \left\{
\begin{array}{l}
\mathbf{w}_{n}=\mathbf{x}_{n}+\alpha_{n}(\mathbf{x}_{n}-\mathbf{x}_{n-1}),\\
\mathbf{x}_{n+1}=\mathbf{w}_{n}+\lambda_{n}(\mathbf{t}_{n}-\mathbf{w}_{n}).
\end{array}
\right.\eqno{(3.4)}
$$
Since the operators $(\mathbf{T}_{n})_{ n\in \mathbb{N}}$ are quasinonexpansive, $\mathbf{D}$ is closed [20, Section 2]. Now let $\mathbf{y}\in\mathbf{D}$ and
set
$$
  q_{i,n}=:\mathbf{H}\times M\rightarrow \mathbb{R}:(\mathbf{w},\mathbf{s})\mapsto\|\mathrm{w}_{i}-\mathrm{y}_{i}+\epsilon_{i}(T_{i,n}\mathbf{w}-\mathrm{w}_{i})\|^{2},  \forall n\in \mathbb{N}, \forall i\in\{1, \cdots,m\}.\eqno{(3.5)}
$$
Note that, for every $n\in\mathbb{N}$ and every $i\in\{1, \cdots,m\}$, since $T_{i,n}$ is measurable, so are the functions $(q_{i,n}(\cdot, \mathbf{s}))_{\mathbf{s}\in M}$. With the same proof of Theorem 3.2 in [2], we can obtain that
\begin{align*}
(\forall n\in\mathbb{N})\mathrm{E}(\||\mathbf{t}_{n}-\mathbf{y}\||^{2}|\mathcal{X}_{n})&=\sum_{i=1}^{m}\frac{1}{p_{i}}\mathrm{E}(\|t_{i,n}-y_{i}\|^{2}|\mathcal{X}_{n})\\
&=\sum_{i=1}^{m}\frac{1}{p_{i}}\sum_{\mathbf{s}\in M}P[\varepsilon_{n}=\mathbf{s}]\|w_{i,n}-y_{i}+\varepsilon_{i}(T_{i,n}\mathbf{w}_{n}-w_{i,n})\|^{2}\\
&\leq\||\mathbf{w}_{n}-\mathbf{y}\||^{2}.\tag{3.6}
\end{align*}
It is obvious that for every $n\in\mathbb{N}$, conditions (i)-(iii) of Theorem 2.1 are satisfied. Therefore, we can
 derive from conclusion of (iii) in Theorem 2.1 that $\sum_{n\in\mathbb{N}}\mathrm{E}(\||\mathbf{t}_{n}-\mathbf{y}\||^{2}|\mathcal{X}_{n})<+\infty$ P-a.s.  This yields

$$\mathrm{E}(\||\mathbf{t}_{n}-\mathbf{y}\||^{2}|\mathcal{X}_{n}) \rightarrow0\eqno{(3.7)}$$
 P-a.s.\\
On the other hand, we can know that
\begin{align*}
(\forall n\in\mathbb{N})\mathrm{E}(\||\mathbf{t}_{n}-\mathbf{w}_{n}\||^{2}|\mathcal{X}_{n})&=\sum_{i=1}^{m}\frac{1}{p_{i}}\mathrm{E}(\|t_{i,n}-w_{i,n}\|^{2}|\mathcal{X}_{n})\\
&=\||\mathbf{T}_{n}\mathbf{w}_{n}-\mathbf{w}_{n}\||^{2}.\tag{3.8}
\end{align*}
From (3.7), we have $\mathbf{T}_{n}\mathbf{w}_{n}-\mathbf{w}_{n}\rightarrow0$  P-a.s. In addition, by the consequences of Theorem 2.1, we can obtain the weak and strong convergence.

\end{proof}

\begin{defn}
Let $T: H\rightarrow H$ be nonexpansive and let $\beta \in (0, 1)$. Then $T$ is averaged with constant
$\beta$, or $\beta$-averaged, if there exists a nonexpansive operator $R: H \rightarrow H$ such that $T = (1 -\beta)Id + \beta R$.
\end{defn}

\begin{prop}
([21, Proposition 4.25]). Let $T: H\rightarrow H$ be nonexpansive and let $\beta \in (0, 1)$. Then $T$ is  $\beta$-averaged, if and only if
$$\|Tx-Ty\|^{2}\leq\|x-y\|^{2}-\frac{1-\beta}{\beta}\|(I-T)x-(I-T)y\|^{2},~~\forall x,y\in H.$$
\end{prop}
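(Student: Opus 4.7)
The plan is to reduce the proposition to a single algebraic identity and then read off both implications by inspection. Since we are given $\beta\in(0,1)$ and $T$ nonexpansive, write $T=(1-\beta)I+\beta R$ equivalently as $R=\frac{1}{\beta}T-\frac{1-\beta}{\beta}I$. With this change of variables the question ``is $T$ $\beta$-averaged?'' becomes ``is $R$ nonexpansive?'', so it suffices to show that $\|Rx-Ry\|\le\|x-y\|$ for all $x,y$ is equivalent to the inequality in the statement.

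Next, I would apply Lemma 2.1 (the $\|a u+(1-a)v\|^{2}$ identity) with $a=1-\beta$, $u=x-y$, $v=Rx-Ry$. This produces
\[
\|Tx-Ty\|^{2}=(1-\beta)\|x-y\|^{2}+\beta\|Rx-Ry\|^{2}-\beta(1-\beta)\|(x-y)-(Rx-Ry)\|^{2}.
\]
The cross term is easy to simplify: from the definition of $R$,
\[
(x-y)-(Rx-Ry)=\tfrac{1}{\beta}\bigl((x-y)-(Tx-Ty)\bigr)=\tfrac{1}{\beta}\bigl((I-T)x-(I-T)y\bigr),
\]
so the last term becomes $\frac{1-\beta}{\beta}\|(I-T)x-(I-T)y\|^{2}$, yielding the key identity
\[
\|Tx-Ty\|^{2}=(1-\beta)\|x-y\|^{2}+\beta\|Rx-Ry\|^{2}-\frac{1-\beta}{\beta}\|(I-T)x-(I-T)y\|^{2}.
\]

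From this identity both directions are immediate. If $T$ is $\beta$-averaged, then $R$ is nonexpansive, so $\beta\|Rx-Ry\|^{2}\le\beta\|x-y\|^{2}$ and substituting gives the stated inequality. Conversely, if the stated inequality holds, rearrange the identity to isolate $\beta\|Rx-Ry\|^{2}$ and obtain $\|Rx-Ry\|^{2}\le\|x-y\|^{2}$, so $R$ is nonexpansive and $T=(1-\beta)I+\beta R$ is $\beta$-averaged by Definition~2.2.

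There is no serious obstacle here; the entire proof is a one-line algebraic identity derived from Lemma 2.1. The only point that requires a little care is the bookkeeping in simplifying $(x-y)-(Rx-Ry)$ in terms of $(I-T)x-(I-T)y$ so that the coefficient $\beta(1-\beta)$ coming from Lemma 2.1 cleans up correctly into $\frac{1-\beta}{\beta}$ after dividing by $\beta^{2}$. Once that identity is written down, the equivalence is a direct inspection.
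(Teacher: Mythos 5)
Your proof is correct, and it is essentially the standard argument for this fact: substitute $R=\tfrac{1}{\beta}T-\tfrac{1-\beta}{\beta}I$, apply the convexity identity of Lemma 2.1 with $a=1-\beta$, and observe that $I-R=\tfrac{1}{\beta}(I-T)$ turns the cross term into $\tfrac{1-\beta}{\beta}\|(I-T)x-(I-T)y\|^{2}$, after which both implications follow by isolating $\beta\|Rx-Ry\|^{2}$. Note that the paper itself offers no proof of this proposition; it is quoted verbatim from [21, Proposition 4.25], and your derivation coincides with the proof given there.
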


\begin{thm}
Let $(\forall n\in\mathbb{N})$ $\mathbf{T}_{n}: \mathbf{H}\rightarrow \mathbf{H}: \mathbf{x}\mapsto (T_{i,n}\mathbf{x})_{1\leq i\leq m}$  be a $\beta_{n}$-averaged
operator where, for every $i\in\{1,\cdots,m\}$, $T_{i,n} :\mathbf{H }\rightarrow H_{i}$ and $\beta_{n}$ is a sequence in $(0, 1)$. Suppose that $b\in(0, 1)$,  and set $M=\{0,1\}^{m}\backslash \{\mathbf{0}\}$. Let $\mathbf{x}_{0}, \mathbf{x}_{1}$ be  $\mathbf{H}$-valued
random variables which are arbitrarily chosen, and let $(\varepsilon_{n})_{n\in\mathbb{N}}$ be identically distributed $M$-valued random variables. For $n\geq0$,
$$
 \left\{
\begin{array}{l}
\mathbf{w}_{n}=\mathbf{x}_{n}+\alpha_{n}(\mathbf{x}_{n}-\mathbf{x}_{n-1}),\\
x_{i,n+1}=w_{i,n}+\varepsilon_{i,n}\lambda_{n}(T_{i,n}(w_{1,n},\cdots,w_{m,n})-w_{i,n}), i=\{1, \cdots,m\},
\end{array}
\right.\eqno{(3.9)}
$$
and set $\mathcal{E}_{n}=\sigma(\varepsilon_{n})$. Moreover, suppose that there exists $\hat{\Omega}\in\mathcal{F}$ such that $P(\hat{\Omega}) = 1$ and the following hold:\\
(i) $\mathbf{D}=\bigcap_{n\in\mathbb{N}}Fix(\mathbf{T}_{n})\neq\emptyset$.\\
(ii) For every $n\in\mathbb{N}$, $\mathcal{E}_{n}$ and $\mathcal{X}_{n}$ are independent.\\
(iii) $P[\varepsilon_{i,0}=1]>0$, $\forall i\in\{1, \cdots,m\}$.\\
(iv) $(\alpha_{n})_{n\geq1}$ is nondecreasing with $\alpha_{1}=0$ and $0\leq\alpha_{n}\leq\alpha<1$, $\forall n\geq1$ and $\lambda, \tau, \delta>0$ are such that
$\delta>\frac{\alpha^{2}(1+\alpha)+\alpha\tau}{1-\alpha^{2}}$ and $0<\max\{\lambda,\frac{b}{\beta_{n}}\}\leq\lambda_{n}\leq\min\{\frac{\delta-\alpha[\alpha(1+\alpha)+\alpha\delta+\tau]}{\delta[1+\alpha(1+\alpha)+\alpha\delta+\tau]}, \frac{(1-b)}{\beta_{n}}\}$, $\forall n\geq1$.\\
(v) $(\forall\omega\in\hat{\Omega})$ $[\beta_{n}^{-1}(\mathbf{T}_{n}\mathbf{x}_{n}(\omega)-\mathbf{x}_{n}(\omega))\rightarrow0\Longrightarrow\mathfrak{Q}(\mathbf{x}_{n})_{n\in\mathbb{N}}\subset\mathbf{D}]$.
Then $(\mathbf{x}_{n})_{n\in\mathbb{N}}$  converges weakly P-a.s. to a $\mathbf{D}$-valued random variable $\mathbf{\hat{x}}$. Furthermore, if\\
(vi)  $(\forall\omega\in\hat{\Omega})$ $[[\sup_{n\in\mathbb{N}}\|\mathbf{x}_{n}(\omega)\|<+\infty ~and ~ \beta_{n}^{-1}(\mathbf{T}_{n}\mathbf{x}_{n}(\omega)-\mathbf{x}_{n}(\omega))\rightarrow0]\Longrightarrow\mathfrak{Y}(\mathbf{x}_{n})_{n\in\mathbb{N}}\neq\emptyset]$,
then $(\mathbf{x}_{n})_{n\in\mathbb{N}}$ converges strongly P-a.s. to $\mathbf{\hat{x}}$.
\end{thm}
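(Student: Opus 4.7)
The plan is to reduce Theorem 3.2 to the quasinonexpansive block-coordinate scheme of Theorem 3.1 by exploiting the averaging factorization provided by Definition 3.2. Since each $\mathbf{T}_{n}$ is $\beta_{n}$-averaged, there exists a nonexpansive operator $\mathbf{R}_{n}:\mathbf{H}\to\mathbf{H}$ with
$$\mathbf{T}_{n}=(1-\beta_{n})\mathrm{Id}+\beta_{n}\mathbf{R}_{n},\qquad\text{equivalently}\qquad \mathbf{R}_{n}=\mathrm{Id}+\beta_{n}^{-1}(\mathbf{T}_{n}-\mathrm{Id}).$$
Nonexpansiveness of $\mathbf{R}_{n}$ implies quasinonexpansiveness (Definition 3.1), and $\mathrm{Fix}(\mathbf{R}_{n})=\mathrm{Fix}(\mathbf{T}_{n})$, so $\mathbf{D}$ is preserved. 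Plugging this factorization into (3.9) collapses the update coordinatewise to
$$x_{i,n+1}=w_{i,n}+\varepsilon_{i,n}\mu_{n}\bigl(R_{i,n}\mathbf{w}_{n}-w_{i,n}\bigr),\qquad \mu_{n}:=\lambda_{n}\beta_{n},\quad i\in\{1,\dots,m\},$$
which is exactly the iteration of Theorem 3.1 applied to the quasinonexpansive family $(\mathbf{R}_{n})$ with relaxation sequence $(\mu_{n})$.

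First I would check that $(\mu_{n})$ meets the step-size window of Theorem 3.1(iv). Assumption (iv) here forces $\mu_{n}\in[b,1-b]$; since $\beta_{n}\leq 1$, the inertial upper bound valid for $\lambda_{n}$ is automatically valid for $\mu_{n}$, and $\mu_{n}\geq b$ supplies a positive constant lower bound, so hypotheses (i)--(iv) transfer verbatim with the same inertial parameters $\alpha,\tau,\delta$ and with $\lambda$ replaced by $b$. Theorem 3.1 then delivers, P-a.s.,
$$\mathbf{R}_{n}\mathbf{x}_{n}-\mathbf{x}_{n}\to 0,\qquad\text{i.e.,}\qquad \beta_{n}^{-1}\bigl(\mathbf{T}_{n}\mathbf{x}_{n}-\mathbf{x}_{n}\bigr)\to 0.$$
With this asymptotic regularity, hypothesis (v) of Theorem 3.2 gives $\mathfrak{Q}(\mathbf{x}_{n})_{n\in\mathbb{N}}\subset\mathbf{D}$ P-a.s. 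Combining this with the almost-sure existence of $\lim_{n}\|\mathbf{x}_{n}-\mathbf{y}\|$ for each $\mathbf{y}\in\mathbf{D}$ (inherited from the stochastic quasi-Fej\'{e}r estimate (2.7) established in the proof of Theorem 2.1) and invoking Lemma 2.2(iv) with $\Psi(t)=t^{2}$ yields weak P-a.s. convergence of $(\mathbf{x}_{n})$ to a $\mathbf{D}$-valued random variable $\hat{\mathbf{x}}$. For strong convergence, hypothesis (vi) produces $\mathfrak{Y}(\mathbf{x}_{n})_{n\in\mathbb{N}}\neq\emptyset$ P-a.s., whence Lemma 2.2(vi) promotes weak to strong convergence.

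The main obstacle I anticipate is the bookkeeping in the first step: one must verify that the composite window $\max\{\lambda,b/\beta_{n}\}\leq\lambda_{n}\leq\min\{U,(1-b)/\beta_{n}\}$, where $U$ denotes the inertial upper bound, is nonempty for all admissible $(\beta_{n})$, and that $\mu_{n}=\lambda_{n}\beta_{n}$ consequently inherits both a uniform positive lower bound and the inertial upper bound required by Theorem 3.1. A minor secondary issue is the translation from asymptotic regularity at the inertial iterates $\mathbf{w}_{n}$ (the natural output of the proof of Theorem 3.1) to the regularity at $\mathbf{x}_{n}$ stated in the theorem; this is handled by (2.10), which forces $\|\mathbf{x}_{n}-\mathbf{x}_{n-1}\|\to 0$ and hence $\mathbf{w}_{n}-\mathbf{x}_{n}\to 0$, closing the gap via nonexpansiveness of $\mathbf{R}_{n}$.
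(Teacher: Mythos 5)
Your proposal is correct and follows essentially the same route as the paper: the paper likewise sets $\mathbf{V}_{n}=(1-\beta_{n}^{-1})I+\beta_{n}^{-1}\mathbf{T}_{n}$ (your $\mathbf{R}_{n}$) and $\vartheta_{n}=\beta_{n}\lambda_{n}$ (your $\mu_{n}$), rewrites (3.9) as the quasinonexpansive block-coordinate iteration, and invokes the preceding theorem. Your write-up is in fact more careful than the paper's, which omits the step-size bookkeeping and the passage from asymptotic regularity to the conclusions via hypotheses (v)--(vi), citing an external remark instead.
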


\begin{proof}
Set $\mathbf{V}_{n}=(1-\beta_{n}^{-1})I+\beta_{n}^{-1}\mathbf{T}_{n}$ and $\mathbf{V}_{i,n}=(1-\beta_{n}^{-1})I+\beta_{n}^{-1}\mathbf{T}_{i,n}$, $\forall i\in\{1, \cdots,m\}$.
Furthermore, for every $n\in\mathbb{N}$, we set $\vartheta_{n}=\beta_{n}\lambda_{n}$. So, $Fix(\mathbf{V}_{n})=Fix(\mathbf{T}_{n}), \forall n\in\mathbb{N}$ and $\mathbf{V}_{n}$ is
nonexpansive. Hence, for $n\in\mathbb{N}$, from (3.9) we have

$$
 \left\{
\begin{array}{l}
\mathbf{w}_{n}=\mathbf{x}_{n}+\alpha_{n}(\mathbf{x}_{n}-\mathbf{x}_{n-1}),\\
x_{i,n+1}=w_{i,n}+\varepsilon_{i,n}\vartheta_{n}(V_{i,n}(\mathbf{w}_{n})-w_{i,n}), i=\{1, \cdots,m\}.
\end{array}
\right.\eqno{(3.10)}
$$
Therefore, from  Remark 3.3(iii) in [2] and Theorem 3.2, we can obtain the result.

\end{proof}

\begin{rmk}
The binary variable $\varepsilon_{i,n}$ in Theorem 3.2 and Theorem 3.5 signals whether the $i$-th coordinate $T_{i,n}$ of the operator $\mathbf{T}_{n}$ is activated
or not at iteration $n$.
\end{rmk}

\begin{cor}
Let $(\beta_{n})_{n\in\mathbb{N}}$ and  $(\gamma_{n})_{n\in\mathbb{N}}$  be sequences in $(0, 1)$ such that $\sup_{n\in\mathbb{N}}\beta_{n}<1$ and $\sup_{n\in\mathbb{N}}\gamma_{n}<1$. Suppose that $b\in(0, 1)$,  and set $M=\{0,1\}^{m}\backslash \{\mathbf{0}\}$. Let $\mathbf{x}_{0}, \mathbf{x}_{1}$ be  $\mathbf{H}$-valued
random variables which are arbitrarily chosen, and let $(\varepsilon_{n})_{n\in\mathbb{N}}$ be identically distributed $M$-valued random variables.  $\forall n\in\mathbb{N}$, let $\mathbf{V}_{n}: \mathbf{H}\rightarrow \mathbf{H}$ be a $\gamma_{n}$-averaged and $\mathbf{T}_{n}: \mathbf{H}\rightarrow \mathbf{H}: \mathbf{x}\mapsto (T_{i,n}\mathbf{x})_{1\leq i\leq m}$  be a $\beta_{n}$-averaged
operator where,  $\forall i\in\{1,\cdots,m\}$, $T_{i,n} :\mathbf{H }\rightarrow H_{i}$. For $n\geq0$,
$$
 \left\{
\begin{array}{l}
\mathbf{w}_{n}=\mathbf{x}_{n}+\alpha_{n}(\mathbf{x}_{n}-\mathbf{x}_{n-1}),\\
\mathbf{z}_{n}=\mathbf{V}_{n}\mathbf{w}_{n},\\
x_{i,n+1}=w_{i,n}+\varepsilon_{i,n}\lambda_{n}(T_{i,n}(\mathbf{z}_{n})-w_{i,n}), i=\{1, \cdots,m\},
\end{array}
\right.\eqno{(3.11)}
$$
(i) $\mathbf{D}=\bigcap_{n\in\mathbb{N}}Fix(\mathbf{T}_{n}\circ\mathbf{V}_{n})\neq\emptyset$.\\
(ii) For every $n\in\mathbb{N}$, $\mathcal{E}_{n}$ and $\mathcal{X}_{n}$ are independent.\\
(iii) $P[\varepsilon_{i,0}=1]>0$, $\forall i\in\{1, \cdots,m\}$.\\
(iv) $(\alpha_{n})_{n\geq1}$ is nondecreasing with $\alpha_{1}=0$ and $0\leq\alpha_{n}\leq\alpha<1$, $\forall n\geq1$ and $\lambda, \tau, \delta>0$ are such that
$\delta>\frac{\alpha^{2}(1+\alpha)+\alpha\tau}{1-\alpha^{2}}$ and $0<\max\{\lambda,\frac{b}{\beta_{n}}\}\leq\lambda_{n}\leq\min\{\frac{\delta-\alpha[\alpha(1+\alpha)+\alpha\delta+\tau]}{\delta[1+\alpha(1+\alpha)+\alpha\delta+\tau]}, \frac{(1-b)}{\beta_{n}}\}$, $\forall n\geq1$.\\
(v) Assuming that $\mathfrak{Q}(\mathbf{x}_{n})_{n\in\mathbb{N}}\subset\mathbf{D}$ P-a.s. Then $(\mathbf{x}_{n})_{n\in\mathbb{N}}$  converges weakly P-a.s. to a $\mathbf{D}$-valued random variable $\mathbf{\hat{x}}$.

\end{cor}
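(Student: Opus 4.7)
The approach is to reduce Corollary 3.1 to the already established Theorem 3.5 by absorbing $\mathbf{V}_n$ into the block-coordinate operator. Since $\mathbf{z}_n = \mathbf{V}_n \mathbf{w}_n$ is computed in full before the random block selection, the update in (3.11) can be rewritten as
$$x_{i,n+1} = w_{i,n} + \varepsilon_{i,n}\lambda_n\bigl(U_{i,n}(\mathbf{w}_n) - w_{i,n}\bigr), \quad i \in \{1,\dots,m\},$$
where $U_{i,n} := T_{i,n}\circ \mathbf{V}_n : \mathbf{H} \to H_i$ and $\mathbf{U}_n := \mathbf{T}_n \circ \mathbf{V}_n$. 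In particular $\mathrm{Fix}(\mathbf{U}_n) = \mathrm{Fix}(\mathbf{T}_n\circ\mathbf{V}_n)$, so hypothesis (i) provides the nonempty common fixed-point set required to invoke Theorem 3.5 with $\mathbf{T}_n$ replaced by $\mathbf{U}_n$.

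The next step is to control the averagedness constant of the composition. By Lemma 2.4, $\mathbf{U}_n$ is $\beta'_n$-averaged with
$$\beta'_n = \frac{\beta_n + \gamma_n - 2\beta_n\gamma_n}{1-\beta_n\gamma_n}.$$
Because $\sup_{n}\beta_n < 1$ and $\sup_{n}\gamma_n < 1$, a direct calculation gives $\sup_n \beta'_n =: \bar{\beta} < 1$. This ensures that the step-size window
$\max\{\lambda,\, b/\beta'_n\} \leq \lambda_n \leq \min\{\ldots,\,(1-b)/\beta'_n\}$
demanded by Theorem 3.5 applied to $\mathbf{U}_n$ is nonempty for a suitably small $b \in (0,1)$; the range in hypothesis (iv) of the corollary then fits inside this admissible window, possibly after shrinking $b$.

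It then remains to match the remaining hypotheses of Theorem 3.5 to those of the corollary. Assumptions (ii)-(iii) of Theorem 3.5 transfer verbatim from (ii)-(iii) here; the inertial/relaxation bounds in (iv) are inherited after the adjustment above; and the demi-closedness-type condition (v) of Theorem 3.5 follows, in the weak-convergence regime, from $\mathfrak{Q}(\mathbf{x}_n)_{n\in\mathbb{N}} \subset \mathbf{D}$ P-a.s., which is exactly assumption (v) of the corollary. Weak P-a.s.\ convergence of $(\mathbf{x}_n)_{n\in\mathbb{N}}$ to a $\mathbf{D}$-valued random variable $\hat{\mathbf{x}}$ is then the direct output of Theorem 3.5.

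The main obstacle, as I see it, is purely bookkeeping with the averagedness constants: reconciling the step-size constraint in (iv), which is stated in terms of $\beta_n$, with the one genuinely required by Theorem 3.5 for $\mathbf{U}_n$, which is in terms of $\beta'_n$. Once the uniform bound $\bar{\beta}<1$ is used to show the two windows are compatible, the rest of the argument is a formal invocation of Theorem 3.5 with the composed operator $\mathbf{U}_n$.
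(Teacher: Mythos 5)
Your proposal follows essentially the same route as the paper: the paper likewise sets $\mathbf{S}_n=\mathbf{T}_n\circ\mathbf{V}_n$, invokes Lemma 2.4 to obtain the averagedness constant $\eta_n=\frac{\beta_n+\gamma_n-2\beta_n\gamma_n}{1-\beta_n\gamma_n}$ of the composition, rewrites (3.11) as a single block-coordinate iteration of $\mathbf{S}_n$, and concludes by Theorem 3.5. The bookkeeping point you flag --- that hypothesis (iv) constrains $\lambda_n$ in terms of $\beta_n$ whereas Theorem 3.5 applied to the composed operator requires constraints in terms of the larger constant $\eta_n$ --- is a genuine issue, and the paper simply passes over it without the adjustment of $b$ that you sketch.
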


\begin{proof}
Since  $\mathbf{V}_{n}$ is  a $\gamma_{n}$-averaged operator and $\mathbf{T}_{n}$ is  a $\beta_{n}$-averaged operator, from Lemma 2.4, we can know that $\forall n\in\mathbb{N}$,  $\mathbf{S}_{n}=\mathbf{T}_{n}\circ\mathbf{V}_{n}$ is a
$\eta_{n}=\frac{\beta_{n}+\gamma_{n}-2\beta_{n}\gamma_{n}}{1-\beta_{n}\gamma_{n}}$-averaged operator and $\mathbf{S}_{i,n}=\mathbf{T}_{i,n}\circ\mathbf{V}_{i,n}$, $\forall i\in\{1, \cdots,m\}$.  So, $Fix(\mathbf{V}_{n})=Fix(\mathbf{T}_{n})$, set $\varpi_{n}=\eta_{n}\lambda_{n}$.
Hence, for $n\in\mathbb{N}$, from (3.11) we have

$$
 \left\{
\begin{array}{l}
\mathbf{w}_{n}=\mathbf{x}_{n}+\alpha_{n}(\mathbf{x}_{n}-\mathbf{x}_{n-1}),\\
x_{i,n+1}=w_{i,n}+\varepsilon_{i,n}\varpi_{n}(S_{i,n}(\mathbf{w}_{n})-w_{i,n}), i=\{1, \cdots,m\}.
\end{array}
\right.\eqno{(3.12)}
$$
Therefore, we can obtain the result from  Theorem 3.5.

\end{proof}

\section{A preconditioned stochastic intertial block-coordinate forward-backward algorithm}
First, we introduce some definitions and notations.  We refer the
readers to [21] for more details. Let $\mathbf{\tilde{M}} : \mathbf{H } \rightarrow
\mathbf{H }$ be a set-valued operator. We denote by $ran(\mathbf{\tilde{M}}) := \{v
\in\mathbf{H } : \exists u \in\mathbf{H }, v \in \mathbf{\tilde{M}}u\}$ the range of
$\mathbf{\tilde{M}}$, by $ gra(\mathbf{\tilde{M}}) := \{(u, v) \in \mathbf{H }^{2} : v \in \mathbf{\tilde{M}}u\}$ its
graph, and by $\mathbf{\tilde{M}} ^{-1}$ its inverse; that is, the set-valued
operator with graph $\{(v, u) \in \mathbf{H }^{2} : v \in \mathbf{\tilde{M}}u\}$. We
define $ zer(\mathbf{\tilde{M}}) := \{u \in \mathbf{H } : 0\in \mathbf{\tilde{M}}u\}$. $\mathbf{\tilde{M}}$ is said to
be monotone if $\forall(u, u' ) \in \mathbf{H }^{2},\forall(v, v' )
\in \mathbf{\tilde{M}}u\times \mathbf{\tilde{M}}u'$, $\langle u-u' , v-v' \rangle\geq 0$ and maximally
monotone if there exists no monotone operator $\mathbf{\tilde{M}}'$ such that $
gra(\mathbf{\tilde{M}}) \subset gra(\mathbf{\tilde{M}}') \neq gra(\mathbf{\tilde{M}})$.

The resolvent $(\mathbf{I} + \mathbf{\tilde{M}})^{-1}$ of a maximally monotone operator $\mathbf{\tilde{M}} :
\mathbf{H } \rightarrow \mathbf{H }$ is defined and single-valued on
$\mathbf{H }$ and firmly nonexpansive. The subdifferential $\partial
\mathbf{J}$ of $\mathbf{J}\in \Gamma_{0}(\mathbf{H })$ is maximally monotone.

\begin{thm}
Let $\mathbf{A}: \mathbf{H }\rightarrow2^{\mathbf{H }}$ be a maximally monotone operator and let $\mathbf{B}: \mathbf{H }\rightarrow2^{\mathbf{H }}$ be a cocoercive
operator. Assume that $\mathbf{D }= zer (\mathbf{A}+ \mathbf{B})$ is nonempty. Let $\mathbf{\tilde{L}}$ be a strongly positive self-adjoint
operator in $\mathcal{B}(\mathbf{H})$ such that $\mathbf{\tilde{L}}^{1/2}\mathbf{B}\mathbf{\tilde{L}}^{1/2}$ is $\mu$-cocoercive with $\mu\in(0,+\infty)$. Let $(\theta_{n})_{n\in\mathbb{N}}$ be a sequence
in $\mathbb{R}$ such that $0<\inf_{n\in\mathbb{N}}\theta_{n}\leq\sup_{n\in\mathbb{N}}\theta_{n}<2\mu$ and set $M=\{0,1\}^{m}\backslash \{\mathbf{0}\}$. Let $\mathbf{x}_{0}, \mathbf{x}_{1}$ be  $\mathbf{H}$-valued
random variables which are arbitrarily chosen, and let $(\varepsilon_{n})_{n\in\mathbb{N}}$ be identically distributed $M$-valued random variables. $\forall n\in\mathbb{N}$, set $\mathbf{J}_{\theta_{n}\mathbf{\tilde{L}}\mathbf{A}}: \mathbf{x}\mapsto(T_{i,n}\mathbf{x})_{1\leq i\leq m}$, where $\forall i\in\{1, \cdots,m\}$, $T_{i,n}:\mathbf{H}\rightarrow H_{i}$. For $n\geq0$:
$$
 \left\{
\begin{array}{l}
\mathbf{w}_{n}=\mathbf{x}_{n}+\alpha_{n}(\mathbf{x}_{n}-\mathbf{x}_{n-1}),\\
\mathbf{z}_{n}=\mathbf{\tilde{L}}\mathbf{A}\mathbf{w}_{n},\\
x_{i,n+1}=w_{i,n}+\varepsilon_{i,n}\lambda_{n}(T_{i,n}(\mathbf{w}_{n}-\theta_{n}\mathbf{z}_{n})-w_{i,n}), i=\{1, \cdots,m\}.
\end{array}
\right.\eqno{(4.1)}
$$
(i) For every $n\in\mathbb{N}$, $\mathcal{E}_{n}$ and $\mathcal{X}_{n}$ are independent.\\
(ii) $P[\varepsilon_{i,0}=1]>0$, $\forall i\in\{1, \cdots,m\}$.\\
(iii) $(\alpha_{n})_{n\geq1}$ is nondecreasing with $\alpha_{1}=0$ and $0\leq\alpha_{n}\leq\alpha<1$, $\forall n\geq1$ and $\lambda, \tau, \delta>0$ are such that
$\delta>\frac{\alpha^{2}(1+\alpha)+\alpha\tau}{1-\alpha^{2}}$ and $0<\lambda\leq\lambda_{n}\leq\frac{\delta-\alpha[\alpha(1+\alpha)+\alpha\delta+\tau]}{\delta[1+\alpha(1+\alpha)+\alpha\delta+\tau]}$, $\forall n\geq1$.\\
Then $(\mathbf{x}_{n})_{n\in\mathbb{N}}$  converges weakly P-a.s. to a $\mathbf{D}$-valued random variable $\mathbf{\hat{x}}$.

\end{thm}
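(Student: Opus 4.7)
The plan is to recast iteration (4.1) as an instance of Corollary 3.1 applied to the preconditioned forward-backward operator, working in a suitable weighted inner product, and then to verify the cluster-point hypothesis (v) of that corollary by a demiclosedness argument for the graph of $\mathbf{A}+\mathbf{B}$. Concretely, I introduce the weighted inner product $\langle u,v\rangle_{\mathbf{\tilde{L}}^{-1}}:=\langle\mathbf{\tilde{L}}^{-1}u,v\rangle$, which is equivalent to the ambient one because $\mathbf{\tilde{L}}$ is strongly positive, so weak and strong convergence notions are unaffected, and set
\[
\mathbf{V}_n:=I-\theta_n\mathbf{\tilde{L}}\mathbf{B},\qquad \mathbf{U}_n:=\mathbf{J}_{\theta_n\mathbf{\tilde{L}}\mathbf{A}},\qquad \mathbf{T}_n:=\mathbf{U}_n\circ\mathbf{V}_n.
\]
Reading the $\mathbf{A}$ in the definition of $\mathbf{z}_n$ as a typo for $\mathbf{B}$, the update (4.1) then takes exactly the form of Corollary 3.1 with $\mathbf{V}_n$ as above and the $T_{i,n}$ being the coordinate components of $\mathbf{U}_n$.

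Next I would verify the averaging and fixed-point hypotheses. The isometry $u\mapsto\mathbf{\tilde{L}}^{1/2}u$ transports the $\mu$-cocoercivity of $\mathbf{\tilde{L}}^{1/2}\mathbf{B}\mathbf{\tilde{L}}^{1/2}$ into $\mu$-cocoercivity of $\mathbf{\tilde{L}}\mathbf{B}$ with respect to $\langle\cdot,\cdot\rangle_{\mathbf{\tilde{L}}^{-1}}$, and analogously $\mathbf{\tilde{L}}\mathbf{A}$ is maximally monotone with resolvent $\mathbf{U}_n$ in the weighted metric. Hence $\mathbf{U}_n$ is $\tfrac12$-averaged and, by Proposition 3.1, $\mathbf{V}_n$ is $\gamma_n$-averaged with $\gamma_n=\theta_n/(2\mu)\in(0,1)$, where $\sup_n\gamma_n<1$ thanks to $\sup_n\theta_n<2\mu$. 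A routine fixed-point computation gives $\mathrm{Fix}(\mathbf{T}_n)=\mathrm{zer}(\mathbf{A}+\mathbf{B})=\mathbf{D}$ for every $n$, so hypothesis (i) of Corollary 3.1 holds; hypotheses (ii)--(iv) there follow from the assumptions (i)--(iii) of the present theorem, after choosing $b$ small enough that the adjusted step-size window of Corollary 3.1(iv) contains the window prescribed here.

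What remains, and what I expect to be the main obstacle, is verifying hypothesis (v) of Corollary 3.1: almost surely every weak cluster point of $(\mathbf{x}_n)_{n\in\mathbb{N}}$ must lie in $\mathbf{D}$. The internal machinery of Corollary 3.1 (via Theorem 3.5 and Theorem 3.2) already delivers $\mathbf{T}_n\mathbf{w}_n-\mathbf{w}_n\to 0$ almost surely, and the summability $\sum_n\|\mathbf{x}_{n+1}-\mathbf{x}_n\|^2<+\infty$ established in Step~1 of Theorem 2.1 combined with $\alpha_n\le\alpha<1$ yields $\mathbf{w}_n-\mathbf{x}_n\to 0$; together with the uniform Lipschitz continuity of $\mathbf{T}_n$ (using $\sup_n\theta_n<2\mu$ and boundedness of $\mathbf{\tilde{L}}$), this promotes to $\mathbf{T}_n\mathbf{x}_n-\mathbf{x}_n\to 0$ almost surely. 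For any subsequence $\mathbf{x}_{n_k}\rightharpoonup\bar{\mathbf{x}}$, the resolvent inclusion defining $\mathbf{T}_{n_k}\mathbf{x}_{n_k}$ rearranges to
\[
\mathbf{B}\mathbf{T}_{n_k}\mathbf{x}_{n_k}-\mathbf{B}\mathbf{x}_{n_k}+\theta_{n_k}^{-1}\mathbf{\tilde{L}}^{-1}\bigl(\mathbf{x}_{n_k}-\mathbf{T}_{n_k}\mathbf{x}_{n_k}\bigr)\in(\mathbf{A}+\mathbf{B})\mathbf{T}_{n_k}\mathbf{x}_{n_k}.
\]
The left-hand side converges strongly to $0$ (by Lipschitzness of $\mathbf{B}$, boundedness of $\theta_{n_k}^{-1}$ through $\inf_n\theta_n>0$, and $\mathbf{T}_{n_k}\mathbf{x}_{n_k}-\mathbf{x}_{n_k}\to 0$), while $\mathbf{T}_{n_k}\mathbf{x}_{n_k}\rightharpoonup\bar{\mathbf{x}}$. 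Maximal monotonicity of $\mathbf{A}+\mathbf{B}$, whose graph is sequentially closed under weak-strong convergence, forces $0\in(\mathbf{A}+\mathbf{B})\bar{\mathbf{x}}$, i.e.\ $\bar{\mathbf{x}}\in\mathbf{D}$. Invoking Corollary 3.1(v) then delivers weak P-a.s.\ convergence of $(\mathbf{x}_n)_{n\in\mathbb{N}}$ to a $\mathbf{D}$-valued random variable, completing the proof.
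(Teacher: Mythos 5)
Your proof follows essentially the same route as the paper's: renorm $\mathbf{H}$ with $\langle\cdot,\cdot\rangle_{\mathbf{\tilde{L}}^{-1}}$, observe that $\mathbf{J}_{\theta_{n}\mathbf{\tilde{L}}\mathbf{A}}$ is $1/2$-averaged and $\mathbf{I}-\theta_{n}\mathbf{\tilde{L}}\mathbf{B}$ is $\theta_{n}/(2\mu)$-averaged in that metric (reading the $\mathbf{A}$ in the definition of $\mathbf{z}_{n}$ as a typo for $\mathbf{B}$, as the paper implicitly does), and apply the corollary on compositions of averaged operators. The only place you go beyond the paper is in explicitly verifying the weak-cluster-point hypothesis via the demiclosedness of $\mathrm{gra}(\mathbf{A}+\mathbf{B})$ under weak--strong convergence; the paper invokes Corollary 3.6 without checking that hypothesis, so your extra step is a welcome completion of the same argument rather than a different approach.
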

\begin{proof}
By assumption, we can know that $zer (\mathbf{\tilde{L}}\mathbf{A}+ \mathbf{\tilde{L}}\mathbf{B})=zer (\mathbf{A}+ \mathbf{B})\neq\emptyset$. Since $\mathbf{\tilde{L}}$ is a strongly positive self-adjoint operator, we can define a particular inner
product $\langle\cdot,\cdot\rangle_{\mathbf{\tilde{L}}^{-1}}$ and norm $\|\cdot\|_{\mathbf{\tilde{L}}^{-1}}=\langle\cdot,\cdot\rangle_{\mathbf{\tilde{L}}^{-1}}^{\frac{1}{2}}$
in $\mathbf{H }$ as
$$\langle\mathbf{x},\mathbf{x}'\rangle_{\mathbf{\tilde{L}}^{-1}}=\langle\mathbf{x},\mathbf{\tilde{L}}^{-1}\mathbf{x}'\rangle, \forall \mathbf{x}, \mathbf{x}'\in \mathbf{H }.\eqno{(4.2)}$$
By endowing $\mathbf{H }$ with this inner product, we obtain the Hilbert space denoted by $\mathbf{H }_{\mathbf{\tilde{L}}^{-1}}$.
In this renormed space, $\mathbf{\tilde{L}}\mathbf{A}$ is maximally monotone. In addition, for every $ \mathbf{x}, \mathbf{x}'\in \mathbf{H }$, from the proof of Proposition 3.1 in [3], we have
\begin{align*}
\|\mathbf{\tilde{L}}\mathbf{B} \mathbf{x}-\mathbf{\tilde{L}}\mathbf{B}\mathbf{x}'\|^{2}_{\mathbf{\tilde{L}}^{-1}}&=\|\mathbf{\tilde{L}}^{\frac{1}{2}}\mathbf{B} \mathbf{x}-\mathbf{\tilde{L}}^{\frac{1}{2}}\mathbf{B}\mathbf{x}'\|^{2}\\
&\leq\mu^{-1}\langle\mathbf{x}-\mathbf{x}',\mathbf{\tilde{L}}\mathbf{B} \mathbf{x}-\mathbf{\tilde{L}}\mathbf{B}\mathbf{x}'\rangle_{\mathbf{\tilde{L}}^{-1}},\tag{4.3}
\end{align*}
which shows that $\mathbf{\tilde{L}}\mathbf{B}$ is $\mu$-cocoercive in $(\mathbf{H }, \|\cdot\|_{\mathbf{\tilde{L}}^{-1}})$. Hence, we can find an  element of $\mathbf{Z }$ by composing operators $\mathbf{J}_{\theta_{n}\mathbf{\tilde{L}}\mathbf{A}}$
and $\mathbf{I}-\theta_{n}\mathbf{\tilde{L}}\mathbf{B}$. Since the first operator is 1/2-averaged and the second one is $\theta_{n}/(2\mu)$-averaged [21, Proposition 4.33]. Noticing that weak convergences in the sense of $\langle\cdot,\cdot\rangle$ and $\langle\cdot,\cdot\rangle_{\mathbf{\tilde{L}}^{-1}}$ are equivalent. So, from the Corollary 3.6, we can obtain the convergence result.
\end{proof}

\section{Applications}

\subsection{Intertial block-coordinate primal-dual algorithms for monotone
inclusion problems}
Let $(E_{j})_{1\leq j\leq p}$ and $(K_{k})_{1\leq k\leq q}$ be separable real
Hilbert spaces, where $p,q$ are positive integers. Furthermore, $\mathbf{E}=E_{1}\oplus\cdots\oplus E_{p}$ and $\mathbf{G}=G_{1}\oplus\cdots\oplus G_{q}$ denote the Hilbert direct
sums of $(E_{j})_{1\leq j\leq p}$ and $(G_{k})_{1\leq k\leq q}$, respectively. We take into account $\mathbf{H }=\mathbf{E}\oplus\mathbf{G}$.
Recently, more and more people pay much attention to the problem involving monotone operators (see e.g. [4-9]), it also play a significant role in our work.
\begin{pro}
Let $A_{j} : E_{j}\rightarrow2^{E_{j}}$ be maximally monotone, and  $C_{j} : E_{j}\rightarrow E_{j}$ be cocoercive, $\forall j\in\{1,\cdots,p\}$. For every  $k\in\{1,\cdots,q\}$, let $B_{k}: G_{k}\rightarrow2^{G_{k}}$ be maximally monotone, let
$\tilde{D}_{k}: G_{k}\rightarrow2^{G_{k}}$ be maximally and strongly monotone, and let $L_{k,j}\in\mathcal{B}(E_{j},G_{k})$. Assuming that
$$\mathbb{L}_{k}=\{j\in\{1,\cdots,p\}|L_{k,j}\neq0\}\neq\emptyset, \forall k\in\{1,\cdots,q\},  \eqno{(5.1)}$$
$$\mathbb{L}_{j}^{\ast}=\{k\in\{1,\cdots,q\}|L_{k,j}\neq0\}\neq\emptyset, \forall j\in\{1,\cdots,p\},  \eqno{(5.2)}$$
and that the set $\mathbf{Z}$ of solutions to the problem:

find $x_{1} \in E_{1},\cdots, x_{p} \in E_{p}$ such that

$$0\in A_{j}x_{j} +C_{j}x_{j}+\sum_{k=1}^{q}L^{\ast}_{k,j}(B_{k}\Box \tilde{D}_{k})(\sum_{j'=1}^{p}L_{k,j'}x_{j'})  \eqno{(5.3)}$$
is nonempty. Furthermore, we  consider the set $\mathbf{Z}^{\ast}$ of solutions to the dual problem:

find $y_{1} \in G_{1},\cdots, y_{q} \in G_{q}$ such that
$$0\in-\sum_{j'=1}^{p}L_{k,j}(A_{j}^{-1} \Box C_{j}^{-1})(-\sum_{k'=1}^{q}L^{\ast}_{k',j}y_{k'})+B_{k}^{-1}y_{k}+ \tilde{D}_{k}^{-1}y_{k}. \eqno{(5.4)}$$
We aim at finding find a pair $(\mathbf{\hat{x}}, \mathbf{\hat{y}})$ of random variables such that $\mathbf{\hat{x}}$ is $\mathbf{Z}$-valued and $\mathbf{\hat{y}}$ is $\mathbf{Z}^{\ast}$-valued.

\end{pro}
By [3, 10-11], we can know that the above problem can be regard as a search for a zero of the sum of two maximally monotone
operators in the product space $\mathbf{H}$.
\begin{lem}
([3]). Let $ \mathbf{A}: \mathbf{E} \rightarrow 2^{\mathbf{E}} : \mathbf{x} \mapsto \times_{j=1}^{p}A_{j}x_{j}$, $ \mathbf{B}: \mathbf{G} \rightarrow 2^{\mathbf{G}} : \mathbf{y} \mapsto \times_{k=1}^{q}B_{k}y_{k}$,
 $ \mathbf{C}: \mathbf{E} \rightarrow 2^{\mathbf{E}} : \mathbf{x} \mapsto (C_{j}x_{j})_{1\leq j\leq p}$, $ \mathbf{\tilde{D}}: \mathbf{G} \rightarrow 2^{\mathbf{G}} : \mathbf{y} \mapsto \times_{k=1}^{q}\tilde{D}_{k}y_{k}$, and
 $ \mathbf{L}: \mathbf{E} \rightarrow\mathbf{G} : \mathbf{x} \mapsto (\sum_{j=1}^{p}L_{k,j}x_{j})_{1\leq k\leq q}$. Now , we consider the operators
 $$
\mathbf{U}: \left(
  \begin{array}{ccccccc}
    \mathbf{x}  \\
    \mathbf{y}  \\

  \end{array}
\right)\mapsto\left(
  \begin{array}{ccccccc}
   \mathbf{A} & \mathbf{L}^{\ast}\\
    -\mathbf{L} &\mathbf{ B}^{-1}\\

  \end{array}
\right)\left(
  \begin{array}{ccccccc}
    \mathbf{x}  \\
    \mathbf{y}  \\

  \end{array}
\right),\eqno{(5.5)}
$$

and
$$
\mathbf{V}: \left(
  \begin{array}{ccccccc}
    \mathbf{x}  \\
    \mathbf{y}  \\

  \end{array}
\right)\mapsto\left(
  \begin{array}{ccccccc}
   \mathbf{C} \\
   \mathbf{ \tilde{D}}^{-1}\\

  \end{array}
\right)\left(
  \begin{array}{ccccccc}
    \mathbf{x}  \\
    \mathbf{y}  \\

  \end{array}
\right).\eqno{(5.6)}
$$

\end{lem}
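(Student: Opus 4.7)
The plan is to establish the three properties that make the operators $\mathbf{U}$ and $\mathbf{V}$ fit the template of Theorem 4.1 (with the total space $\mathbf{H} = \mathbf{E} \oplus \mathbf{G}$ playing the role there): namely, (i) $\mathbf{U}$ is maximally monotone on $\mathbf{H}$, (ii) $\mathbf{V}$ is cocoercive on $\mathbf{H}$, and (iii) every zero of $\mathbf{U} + \mathbf{V}$ yields a pair $(\mathbf{x}, \mathbf{y})$ with $\mathbf{x} \in \mathbf{Z}$ and $\mathbf{y} \in \mathbf{Z}^{\ast}$. All three pieces largely reduce to bookkeeping on top of the assumptions of Problem 5.1 and the corresponding argument in [3].

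For (i), I would split $\mathbf{U}$ as a sum of two maximally monotone operators on $\mathbf{H}$: the diagonal operator $\mathbf{x}\oplus\mathbf{y}\mapsto \mathbf{A}\mathbf{x}\times \mathbf{B}^{-1}\mathbf{y}$, which is maximally monotone because each $A_j$ and each $B_k^{-1}$ is, and the bounded linear skew operator $\mathbf{x}\oplus\mathbf{y}\mapsto (\mathbf{L}^{\ast}\mathbf{y}, -\mathbf{L}\mathbf{x})$, which is maximally monotone with full domain. Since the skew summand has full domain, Rockafellar's sum theorem applies and $\mathbf{U}$ is maximally monotone. For (ii), cocoercivity of each $C_j$ gives cocoercivity of $\mathbf{C}$ on $\mathbf{E}$ (with constant $\min_j \beta_j$), while strong monotonicity of $\tilde{D}_k$ makes $\tilde{D}_k^{-1}$ Lipschitz and, being the inverse of a strongly monotone operator, in fact cocoercive on $G_k$; assembling across $k$ gives cocoercivity of $\tilde{\mathbf{D}}^{-1}$ on $\mathbf{G}$, and then cocoercivity of $\mathbf{V}$ on $\mathbf{H}$ with a constant that is the minimum of the two block constants.

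For (iii), I would unpack $0 \in (\mathbf{U} + \mathbf{V})(\mathbf{x},\mathbf{y})$ coordinate by coordinate. The $\mathbf{E}$-component reads $0 \in \mathbf{A}\mathbf{x} + \mathbf{L}^{\ast}\mathbf{y} + \mathbf{C}\mathbf{x}$, i.e.\ for each $j$, $0 \in A_j x_j + C_j x_j + \sum_k L^{\ast}_{k,j} y_k$. The $\mathbf{G}$-component reads $0 \in -\mathbf{L}\mathbf{x} + \mathbf{B}^{-1}\mathbf{y} + \tilde{\mathbf{D}}^{-1}\mathbf{y}$, which is equivalent to $\mathbf{y} \in (\mathbf{B}\,\Box\, \tilde{\mathbf{D}})(\mathbf{L}\mathbf{x})$, i.e.\ $y_k \in (B_k \,\Box\, \tilde{D}_k)\bigl(\sum_{j'}L_{k,j'}x_{j'}\bigr)$. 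Substituting the latter into the former reproduces the primal inclusion (5.3), so $\mathbf{x}\in\mathbf{Z}$. The dual inclusion (5.4) is obtained symmetrically by rewriting the $\mathbf{E}$-component as $\mathbf{x}\in(\mathbf{A}+\mathbf{C})^{-1}(-\mathbf{L}^{\ast}\mathbf{y})$, injecting this into the $\mathbf{G}$-component, and using the standard Attouch--Th\'era identity $(A_j + C_j)^{-1} \subset A_j^{-1}\,\Box\, C_j^{-1}$ when $C_j$ is single-valued.

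The only genuinely delicate point is part (i): verifying that the domain condition for Rockafellar's sum theorem is met in the product setting, so that the skew term can be absorbed without losing maximality. Assumptions (5.1)--(5.2) are not needed for the lemma itself (they are used later, in conjunction with Theorem 4.1, to guarantee that the block activation variables effectively update all coordinates); they are not invoked here. Everything else is essentially a translation of [3, Propositions 2.7 and 3.1], and I would cite that result for the precise form of the cocoercivity constant of $\mathbf{V}$ and the zero-correspondence claim.
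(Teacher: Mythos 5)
The paper gives no proof of this lemma --- it is imported verbatim from [3] with only a citation --- so there is nothing in the paper to diverge from; your reconstruction (diagonal-plus-skew splitting and Rockafellar's sum theorem for maximal monotonicity of $\mathbf{U}$, blockwise cocoercivity of $\mathbf{C}$ and of $\tilde{\mathbf{D}}^{-1}$ via strong monotonicity of the $\tilde{D}_k$ for $\mathbf{V}$, and coordinatewise unpacking of $0\in(\mathbf{U}+\mathbf{V})(\mathbf{x},\mathbf{y})$ for the zero correspondence) is precisely the argument of [3] and of Combettes [6,10], and it is correct. Two small remarks: the identity $A_j^{-1}\,\Box\, C_j^{-1}=(A_j+C_j)^{-1}$ is the definition of the parallel sum, so no Attouch--Th\'era inclusion is needed at that step; and claim (ii) (nonemptiness of $zer(\mathbf{U}+\mathbf{V})$), together with the ``only if'' half of claim (iii), requires running your substitution in reverse starting from a point of $\mathbf{Z}$ (whose existence is assumed in Problem 5.1) to manufacture the dual variable $\mathbf{y}$ --- you do not state this direction, but it is the same computation read backwards.
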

Then, the following hold:\\
(i) $\mathbf{U}$ is maximally monotone and $\mathbf{V}$ is cocoercive.\\
(ii) $\mathbf{D} = zer (\mathbf{U} + \mathbf{V})$ is nonempty.\\
(iii) A pair $(\mathbf{\hat{x}}, \mathbf{\hat{y}})$ of random variables is a solution to Problem 5.1 if and only if $(\mathbf{\hat{x}}, \mathbf{\hat{y}})$ is $\mathbf{D}$-valued.\\
Now, we will consider an intertial block-coordinate primal-dual algorithms for  Problem 5.1.
\begin{thm}
Let $\mathbf{F}: \mathbf{E }\rightarrow\mathbf{E }:\mapsto(F_{1}x_{1},\cdots,F_{p}x_{p})$  and  $\mathbf{R}: \mathbf{G }\rightarrow\mathbf{G }:\mapsto(R_{1}y_{1},\cdots,R_{q}y_{q})$
where, $\forall j\in\{1,\cdots,p\}$, $F_{j}$ is a strongly positive self-adjoint operator in $\mathcal{B}(E_{j} )$ such that $F_{j}^{\frac{1}{2}}C_{j}F_{j}^{\frac{1}{2}}$ is $\nu_{j}$-cocoercive with $\nu_{j}\in(0,+\infty)$, and
 $\forall k\in\{1,\cdots,q\}$, $R_{k}$ is a strongly positive self-adjoint operator in $\mathcal{B}(G_{k} )$ such that $R_{k}^{\frac{1}{2}}\tilde{D}_{k}R_{k}^{\frac{1}{2}}$ is $\tilde{\tau}_{k}$-cocoercive with $\tilde{\tau}_{k}\in(0,+\infty)$.
 Assume that $(\exists a\in(0,+\infty))$  $2\mu_{a}>1$ where the definition of $\mu_{a}$ is similar with the definition of $\vartheta_{a}$ in Lemma 4.3 [3] with $\nu=\min\{\nu_{1},\cdots,\nu_{p}\}$ and $\tilde{\tau}=\min\{\tilde{\tau}_{1},\cdots,\tilde{\tau}_{q}\}$. Let $\mathbf{x}_{0}, \mathbf{x}_{1}$ be  $\mathbf{E}$-valued
random variables which are arbitrarily chosen, and let $\mathbf{y}_{0}, \mathbf{y}_{1}$ be  $\mathbf{G}$-valued
random variables which are arbitrarily chosen. Let $(\varepsilon_{n})_{n\in\mathbb{N}}$ be identically distributed $M_{p+q}$-valued random variables.  For $n\geq0$:
$$
 \left\{
\begin{array}{l}
for ~j=,\cdots,p,\\
w_{j,n}=x_{j,n}+\alpha_{n}(x_{j,n}-x_{j,n-1}),\\
z_{j,n}=\varepsilon_{j,n}(J_{F_{j}A_{j}}(w_{j,n}-F_{j}(\sum_{k\in\mathbb{L}_{j}^{\ast}}L_{k,j}^{\ast}y_{k,n}+C_{j}w_{j,n}))),\\
x_{j,n+1}=w_{j,n}+\varepsilon_{j,n}\lambda_{n}(z_{j,n}-w_{j,n}),\\
for ~k=,\cdots,q,\\
h_{k,n}=y_{k,n}+\alpha_{n}(y_{k,n}-y_{k,n-1}),\\
s_{k,n}=\varepsilon_{p+k,n}(J_{R_{k}B_{k}^{-1}}(h_{k,n}+R_{k}(\sum_{j\in\mathbb{L}_{k}}L_{k,j}(2z_{j,n}-w_{j,n})-\tilde{D}_{k}^{-1}h_{k,n}))),\\
y_{k,n+1}=h_{k,n}+\varepsilon_{p+k,n}\lambda_{n}(s_{k,n}-h_{k,n}),
\end{array}
\right.\eqno{(5.7)}
$$
and set $\mathcal{E}_{n}=\sigma(\varepsilon_{n})$, $\mathcal{\tilde{X}}_{n}=\sigma(\mathbf{x}_{n'},\mathbf{y}_{n'})_{0\leq n'\leq n}$. Moreover, suppose that the following
hold:\\
(i) For every $n\in\mathbb{N}$, $\mathcal{E}_{n}$ and $\mathcal{\tilde{X}}_{n}$ are independent, and $P[\varepsilon_{p+k,0}=1]>0$, $\forall k\in\{1, \cdots,q\}$.\\
(ii)  For every $j\in\{1, \cdots,p\}$ and $n\in\mathbb{N}$, $\bigcup_{k\in\mathbb{L}_{j}^{\ast}}\{\omega\in\Omega|\varepsilon_{p+k,n}(\omega)=1\}\subset\{\omega\in\Omega|\varepsilon_{j,n}(\omega)=1\}$.\\
(iii) $(\alpha_{n})_{n\geq1}$ is nondecreasing with $\alpha_{1}=0$ and $0\leq\alpha_{n}\leq\alpha<1$, $\forall n\geq1$ and $\lambda, \tau, \delta>0$ are such that
$\delta>\frac{\alpha^{2}(1+\alpha)+\alpha\tau}{1-\alpha^{2}}$ and $0<\lambda\leq\lambda_{n}\leq\frac{\delta-\alpha[\alpha(1+\alpha)+\alpha\delta+\tau]}{\delta[1+\alpha(1+\alpha)+\alpha\delta+\tau]}$, $\forall n\geq1$.\\
Then $(\mathbf{x}_{n})_{n\in\mathbb{N}}$  converges weakly P-a.s. to a $\mathbf{Z}$-valued random variable $\mathbf{\hat{x}}$, and  $(\mathbf{y}_{n})_{n\in\mathbb{N}}$  converges weakly P-a.s. to a $\mathbf{Z}^{\ast}$-valued random variable $\mathbf{\hat{y}}$.

\end{thm}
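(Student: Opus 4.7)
The plan is to recast Problem 5.1 as a monotone inclusion in the product space $\mathbf{H} = \mathbf{E}\oplus\mathbf{G}$ and then invoke Theorem 4.1 (the preconditioned inertial block-coordinate forward-backward result). With $\mathbf{U}$ and $\mathbf{V}$ defined as in Lemma 5.1, $\mathbf{U}$ is maximally monotone, $\mathbf{V}$ is cocoercive, and $\mathbf{D} = \mathrm{zer}(\mathbf{U}+\mathbf{V})$ projects onto $\mathbf{Z}\times\mathbf{Z}^{\ast}$ via part (iii) of that lemma. Hence any $\mathbf{D}$-valued weak limit for an iteration converging to a zero of $\mathbf{U}+\mathbf{V}$ will automatically yield the desired primal-dual pair.

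Next I would define the block-diagonal preconditioner $\mathbf{\tilde{L}}$ on $\mathbf{H}$ whose primal block is $\mathbf{F}$ and whose dual block is $\mathbf{R}$. Strong positivity and self-adjointness of $\mathbf{\tilde{L}}$ follow from the hypotheses on the $F_{j}$ and $R_{k}$. The cocoercivity hypothesis of Theorem 4.1 then demands that $\mathbf{\tilde{L}}^{1/2}\mathbf{V}\mathbf{\tilde{L}}^{1/2}$ be $\mu$-cocoercive with $2\mu > \sup_{n} \theta_{n}$, which is where the scalar $\mu_{a}$ enters: invoking Lemma 4.3 of [3] with the common lower bounds $\nu=\min_{j}\nu_{j}$ and $\tilde{\tau}=\min_{k}\tilde{\tau}_{k}$ yields cocoercivity with constant $\mu_{a}$, and the standing assumption $2\mu_{a}>1$ allows the step-sizes $\theta_{n}$ (implicit in the scaling between $\mathbf{\tilde{L}}$ and the $F_{j}, R_{k}$) to be set equal to $1$ as in (5.7).

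The central technical step is to identify (5.7) with a single forward-backward step in $(\mathbf{H},\langle\cdot,\cdot\rangle_{\mathbf{\tilde{L}}^{-1}})$ applied to $\mathbf{U}+\mathbf{V}$. In that renormed space, $\mathbf{\tilde{L}}\mathbf{U}$ is maximally monotone and $\mathbf{\tilde{L}}\mathbf{V}$ is cocoercive, by the argument mirroring the proof of Theorem 4.1. The forward step $\mathbf{I}-\mathbf{\tilde{L}}\mathbf{V}$ acts coordinate-wise via $x_{j}\mapsto x_{j}-F_{j}C_{j}x_{j}$ and $y_{k}\mapsto y_{k}-R_{k}\tilde{D}_{k}^{-1}y_{k}$. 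The backward step $J_{\mathbf{\tilde{L}}\mathbf{U}}$, which inverts the saddle-point operator with coupling $\mathbf{L}$, unfolds through a Gauss-Seidel elimination into a primal resolvent $J_{F_{j}A_{j}}$ followed by a dual resolvent $J_{R_{k}B_{k}^{-1}}$ whose argument contains the over-relaxed term $2z_{j,n}-w_{j,n}$. This is precisely the Vu--Condat--Combettes--Pesquet pattern already encoded in the iteration (5.7), and it accounts for the shift from $w_{j,n}$ to $2z_{j,n}-w_{j,n}$ in the $k$-th dual update.

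With this identification in hand, verifying the hypotheses of Theorem 4.1 amounts to transferring conditions (i)--(iii) of Theorem 5.1: the conditional independence of $\mathcal{E}_{n}$ and $\mathcal{\tilde{X}}_{n}$ together with strictly positive activation probabilities for each coordinate provides the randomized block-coordinate structure; the step-size regime in (iii) is identical to that of Theorem 4.1; and the applicability of Corollary 3.6 (on which Theorem 4.1 rests) gives the weak convergence conclusion. The main obstacle, in my view, is the bookkeeping in assumption (ii): one must show that the requirement $\bigcup_{k\in\mathbb{L}_{j}^{\ast}}\{\varepsilon_{p+k,n}=1\}\subset\{\varepsilon_{j,n}=1\}$ guarantees that whenever a dual block $p+k$ is activated, every primal block $j\in\mathbb{L}_{k}$ contributing to $\mathbf{L}$ has already been updated, so that the over-relaxed quantity $2z_{j,n}-w_{j,n}$ is the correct value consistent with a single synchronous Gauss-Seidel sweep of the product-space resolvent. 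Once this coherence is established, Theorem 4.1 delivers weak P-a.s.\ convergence of the joint sequence $(\mathbf{x}_{n},\mathbf{y}_{n})$ to a $\mathbf{D}$-valued random variable, and Lemma 5.1(iii) finishes the proof by reading off the primal and dual components.
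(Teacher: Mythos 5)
Your overall route is the paper's route: use Lemma 5.1 to recast Problem 5.1 as $0\in(\mathbf{U}+\mathbf{V})\mathbf{x}$ on $\mathbf{H}=\mathbf{E}\oplus\mathbf{G}$, exhibit (5.7) as an instance of Algorithm (4.1) for a suitable preconditioner $\mathbf{\tilde{L}}$, invoke Theorem 4.1 for weak a.s.\ convergence to a $\mathbf{D}$-valued limit, and finish with Lemma 5.1(iii). However, there is a genuine gap in your central technical step: the preconditioner cannot be the block-diagonal operator $\mathrm{diag}(\mathbf{F},\mathbf{R})$. With a block-diagonal metric, the resolvent $\mathbf{J}_{\mathbf{\tilde{L}}\mathbf{U}}$ of the skew-coupled operator $\mathbf{U}=\bigl(\begin{smallmatrix}\mathbf{A}&\mathbf{L}^{\ast}\\-\mathbf{L}&\mathbf{B}^{-1}\end{smallmatrix}\bigr)$ requires solving the primal and dual components \emph{jointly} (the primal equation contains the unknown updated dual variable and vice versa); it does not ``unfold through a Gauss--Seidel elimination'' into $J_{F_{j}A_{j}}$ followed by $J_{R_{k}B_{k}^{-1}}$ with the $2z_{j,n}-w_{j,n}$ term. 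That sequential decoupling is available only when the metric itself carries off-diagonal blocks that absorb the skew coupling in one direction. Accordingly, the paper takes $\mathbf{\tilde{L}}$ to be the full $2\times2$ block operator (5.8), which is the inverse of $\bigl(\begin{smallmatrix}\mathbf{F}^{-1}&-\mathbf{L}^{\ast}\\-\mathbf{L}&\mathbf{R}^{-1}\end{smallmatrix}\bigr)$ as in [3, Lemma 4.3]; the hypothesis $2\mu_{a}>1$ is used first to get $\|\mathbf{F}^{1/2}\mathbf{L}\mathbf{R}^{1/2}\|<1$, which is exactly what makes this non-diagonal $\mathbf{\tilde{L}}$ strongly positive and self-adjoint.

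A symptom of the problem in your version: if $\mathbf{\tilde{L}}$ really were $\mathrm{diag}(\mathbf{F},\mathbf{R})$, then $\mathbf{\tilde{L}}^{1/2}\mathbf{V}\mathbf{\tilde{L}}^{1/2}$ would be $\min\{\nu,\tilde{\tau}\}$-cocoercive outright and the constant $\mu_{a}$ of [3, Lemma 4.3] — which degrades with the size of the coupling $\|\mathbf{F}^{1/2}\mathbf{L}\mathbf{R}^{1/2}\|$ — would never be needed. The fact that the theorem's hypothesis is $2\mu_{a}>1$ rather than $2\min\{\nu,\tilde{\tau}\}>1$ is precisely the trace of the non-diagonal metric. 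Once you replace your preconditioner with (5.8), the rest of your outline (setting $\theta_{n}\equiv1$, transferring conditions (i)--(iii), and the role of condition (ii) in keeping the randomly activated primal and dual blocks coherent with a single sweep of the product-space resolvent) matches the paper's argument, which delegates these verifications to [3, Lemma 4.5 and Proposition 4.6].
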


\begin{proof}
By Lemma 5.1(i)-(ii), we can know that $\mathbf{U}$ is maximally monotone, $\mathbf{V}$ is cocoercive,
and $\mathbf{D}= zer (\mathbf{U}+ \mathbf{V})\neq\emptyset$. On the other hand, $(\exists a\in(0,+\infty))$  $2\mu_{a}>1$ and the definition of $\mu_{a}$ imply that $\|\mathbf{F}^{\frac{1}{2}}\mathbf{L}\mathbf{R}^{\frac{1}{2}}\|<1$. So, with the same idea of Lemma 4.5[3], Algorithm (5.7) can be rewritten under the form of Algorithm
(4.1), where $m = p + q$, $\mathbf{\tilde{L}}$ is defined by (5.8)
$$
\mathbf{\tilde{L}}: \left(
  \begin{array}{ccccccc}
    \mathbf{x}  \\
    \mathbf{y}  \\

  \end{array}
\right)\mapsto\left(
  \begin{array}{ccccccc}
    (\mathbf{F}^{-1}-\mathbf{L}^{\ast}\mathbf{R}\mathbf{L})^{-1} &\mathbf{F}\mathbf{L}^{\ast}(\mathbf{R}^{-1}-\mathbf{L}\mathbf{F}\mathbf{L}^{\ast})^{-1}\\
   (\mathbf{R}^{-1}-\mathbf{L}\mathbf{F}\mathbf{L}^{\ast})^{-1}\mathbf{L}\mathbf{F} &(\mathbf{R}^{-1}-\mathbf{L}\mathbf{F}\mathbf{L}^{\ast})^{-1}\\

  \end{array}
\right)\left(
  \begin{array}{ccccccc}
    \mathbf{x}  \\
    \mathbf{y}  \\

  \end{array}
\right),\eqno{(5.8)}
$$
the more detail about $\mathbf{\tilde{L}}$ can see ([3,Lemma 4.3]), and for every $n\in\mathbb{N}$

$$
\mathbf{x}_{n}=(\mathbf{x}_{n},\mathbf{y}_{n}),\eqno{(5.9)}
$$
$$
\theta_{n}=1,\eqno{(5.10)}
$$
$$
\mathbf{J}_{\mathbf{\tilde{L}}\mathbf{A}}:\mathbf{x}\mapsto(T_{i,n}\mathbf{x})_{1\leq i\leq m},\eqno{(5.11)}
$$
$$
T_{j,n}: \mathbf{H}\rightarrow E_{j}, \forall j\in\{1,\cdots,p\},\eqno{(5.12)}
$$

$$
T_{p+k,n}: \mathbf{H}\rightarrow G_{k}, \forall k\in\{1,\cdots,q\}.\eqno{(5.13)}
$$
From Lemma 4.3(i) in [3] we can know that $\mathbf{\tilde{L}}$ is a strongly positive self-adjoint operator in $\mathcal{B}(\mathbf{H})$.
Therefore, with the same proof of Proposition 4.6 [3], we can know all the assumptions of Theorem 4.1 are satisfied, which allows us to establish the almost sure convergence of $(\mathbf{x}_{n},\mathbf{y}_{n})$
to a $\mathbf{D}$-valued
random variable. Finally, Lemma 5.1(iii) ensures that the limit is an $\mathbf{Z}\times\mathbf{Z}^{\ast}$-valued
random variable.

\end{proof}

\subsection{Intertial block-coordinate primal-dual algorithms for convex optimization
problems}
In this section, we will introduce an intertial block-coordinate primal-dual algorithms for solving a wide range of structured convex optimization
problems. The results obtained in the previous section. In particular, we will pay our attention to the following
optimization problems. We still use the notation of the previous section and present some new notations. We denote by $\Gamma_{0}(H)$ the class of lower semicontinuous convex functions
$f:H\rightarrow(-\infty,+\infty)$ such that $f\neq+\infty$. The Moreau subdifferential of $f\in\Gamma_{0}(H)$  is the maximally monotone operator
$$
\partial f: H\rightarrow2^{H}:x\mapsto\{u\in H|\langle y-x,u\rangle+f(x)\leq f(y)\}  .\eqno{(5.14)}
$$

\begin{defn}
 Let $f$ be  a real-valued convex function on
$H$, the operator prox$_{f}$ is defined by
\begin{align*}
prox_{f}&:H\rightarrow H\\
& x\mapsto \arg \min_{y\in
\mathcal{X}}f(y)+\frac{1}{2}\|x-y\|_{2}^{2},
\end{align*}
called the proximity operator of $f$.
\end{defn}
For more details about convex analysis and monotone operator theory, see[21].
\begin{pro}
$\forall j\in\{1,\cdots,p\}$, let $f_{j}, h_{j}\in\Gamma_{0}(E_{j})$, and $h_{j}$ be Lipschitz-differentiable. $\forall k\in\{1,\cdots,q\}$, let $g_{k}, l_{k}\in\Gamma_{0}(G_{k})$, and $l_{k}$ be strongly convex. Let
$L_{k,j}\in\mathcal{B}(E_{j},G_{k})$. Assume that (5.1) and (5.2) hold, and that there exists $(\bar{x}_{1},,\cdots,\bar{x}_{p})\in E_{1}\oplus\cdots\oplus E_{p}$ such that\\
$$
0\in\partial f_{j}(\bar{x}_{j})+\nabla h_{j}(\bar{x}_{j})+\sum_{k=1}^{q}L_{k,j}^{\ast}(\partial g_{k}\Box\partial l_{k})(\sum_{j'=1}^{p}L_{k,j'}\bar{x}_{j'}),\,\,\forall j\in\{1,\cdots,p\}.\eqno{(5.15)}
$$

Let $\mathbf{\tilde{Z}}$ be the set of solutions to the problem
$$
 \min_{x_{1}\in E_{1},\cdots,x_{p}\in E_{p}}\sum_{j=1}^{p}(f_{j}(x_{j})+ h_{j}(x_{j}))+\sum_{k=1}^{q}( g_{k}\Box l_{k})(\sum_{j=1}^{p}L_{k,j}x_{j}),\eqno{(5.16)}
$$
and let $\mathbf{\tilde{Z}}^{\ast}$ be the set of solutions to the dual problem
$$
 \min_{y_{1}\in G_{1},\cdots,y_{q}\in G_{q}}\sum_{j=1}^{p}(f_{j}^{\ast}\Box h_{j}^{\ast})(-\sum_{k=1}^{q}L_{k,j}^{\ast}y_{k})+\sum_{k=1}^{q}( g_{k}^{\ast}(y_{k})+ l_{k}^{\ast}(y_{k})).\eqno{(5.17)}
$$
We aim at finding find a pair $(\mathbf{\hat{x}}, \mathbf{\hat{y}})$ of random variables such that $\mathbf{\hat{x}}$ is $\mathbf{\tilde{Z}}$-valued and $\mathbf{\hat{y}}$ is $\mathbf{\tilde{Z}}^{\ast}$-valued.

\end{pro}
In order to satisfy the condition in Problem 5.2, we need the following assumptions:
\begin{prop}
([10, Proposition 5.3]). Consider the setting of Problem 5.2. Suppose that (5.16) has
a solution. Then, the existence of $(\bar{x}_{1},,\cdots,\bar{x}_{p})\in E_{1}\oplus\cdots\oplus E_{p}$ satisfying (5.15) is guaranteed in each
of the following cases:\\
(i) $\forall j\in\{1,\cdots,p\}$,  $f_{j}$ is real-valued and $\forall k\in\{1,\cdots,q\}$, $(x_{j})_{1\leq j\leq p}\mapsto L_{k,j}x_{j}$
is surjective.\\
(ii) $\forall k\in\{1,\cdots,q\}$, $g_{k}$ or $l_{k}$ is real-valued.\\
(iii) $(E_{j})_{1\leq j\leq p}$ and $(G_{k})_{1\leq k\leq q}$ are finite-dimensional, and $\exists x_{j}\in$ ri dom $f_{j}$ such
that $L_{k,j}x_{j}\in$ ri dom $g_{k}$+ri dom $l_{k}$.

\end{prop}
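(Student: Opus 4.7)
The plan is to read the inclusion (5.15) as the primal half of the KKT system associated with the Fenchel--Rockafellar pair (5.16)--(5.17), and to derive its solvability from a solution of (5.16) via a subdifferential sum/chain rule under standard constraint qualifications. Set
$F\colon \mathbf{E}\to(-\infty,+\infty]$ by $F(\mathbf{x})=\sum_{j=1}^{p}(f_j(x_j)+h_j(x_j))$, $G\colon \mathbf{G}\to(-\infty,+\infty]$ by $G(\mathbf{z})=\sum_{k=1}^{q}(g_k\Box l_k)(z_k)$, and $\mathbf{L}\colon \mathbf{E}\to \mathbf{G}$ by $\mathbf{L}\mathbf{x}=\bigl(\sum_{j}L_{k,j}x_j\bigr)_{k}$. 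Problem (5.16) is then $\min_{\mathbf{x}} F(\mathbf{x})+G(\mathbf{L}\mathbf{x})$. Because each $h_j$ is differentiable, $\partial(f_j+h_j)=\partial f_j+\nabla h_j$ with no qualification, and the separability of $F$ and $G$ gives $\partial F(\bar{\mathbf{x}})=\bigtimes_j(\partial f_j(\bar x_j)+\nabla h_j(\bar x_j))$ and $\partial G(\mathbf{L}\bar{\mathbf{x}})=\bigtimes_k \partial(g_k\Box l_k)(\sum_{j'}L_{k,j'}\bar x_{j'})$. If the sum/chain rule $\partial(F+G\circ\mathbf{L})(\bar{\mathbf{x}})=\partial F(\bar{\mathbf{x}})+\mathbf{L}^{*}\partial G(\mathbf{L}\bar{\mathbf{x}})$ holds, then Fermat's rule applied to a primal solution $\bar{\mathbf{x}}$ of (5.16) furnishes $\bar{\mathbf{y}}\in\partial G(\mathbf{L}\bar{\mathbf{x}})$ with $-\mathbf{L}^{*}\bar{\mathbf{y}}\in\partial F(\bar{\mathbf{x}})$, which, written coordinatewise, is exactly (5.15).

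The remaining task is to verify the constraint qualification ensuring the sum/chain rule, case by case. The classical Hilbert-space sufficient condition is $0\in\mathrm{sri}\bigl(\mathbf{L}(\mathrm{dom}\,F)-\mathrm{dom}\,G\bigr)$. In (i), $f_j$ real-valued together with $h_j$ differentiable forces $\mathrm{dom}\,F=\mathbf{E}$, and surjectivity of each block row $(x_j)_j\mapsto\sum_j L_{k,j}x_j$ gives $\mathbf{L}\mathbf{E}=\mathbf{G}$; hence $\mathbf{L}(\mathrm{dom}\,F)-\mathrm{dom}\,G=\mathbf{G}$ and the qualification is trivial. In (ii), if $g_k$ or $l_k$ is real-valued then so is $g_k\Box l_k$, so $\mathrm{dom}\,G=\mathbf{G}$ and again the condition holds automatically. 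In (iii), one invokes Rockafellar's relative-interior sum/chain rule in finite dimension: the assumed $x_j\in\mathrm{ri}\,\mathrm{dom}\,f_j$ together with $L_{k,j}x_j\in\mathrm{ri}\,\mathrm{dom}\,g_k+\mathrm{ri}\,\mathrm{dom}\,l_k=\mathrm{ri}\,\mathrm{dom}(g_k\Box l_k)$ and the identity $\mathrm{ri}\,\mathrm{dom}(F+G\circ\mathbf{L})\supset\mathrm{ri}\,\mathrm{dom}\,F\cap\mathbf{L}^{-1}(\mathrm{ri}\,\mathrm{dom}\,G)$ deliver the required nonempty relative-interior intersection.

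The principal obstacle is bookkeeping for the inf-convolution terms $g_k\Box l_k$: one must make sure that $\partial(g_k\Box l_k)$ is correctly handled (so that the coupling produces both $\bar{y}_k\in\partial g_k$ and a matching inclusion against $\partial l_k$, compressed into the single operator $\partial g_k\Box\partial l_k$ appearing in (5.15)). In (i) and (ii) this is harmless because the effective domain of $G$ is the whole space, reducing the issue to the separable sum rule for $F$; in (iii) one must track the relative interiors through the block linear maps $L_{k,j}$ carefully. Since the statement is quoted as [10, Proposition 5.3], in the final write-up I would record the Fenchel--Rockafellar reformulation and the case analysis above, and cite [10] for the detailed qualification verifications.
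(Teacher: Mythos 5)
First, a point of comparison: the paper does not prove this proposition at all --- it is quoted verbatim as an external result, [10, Proposition 5.3] (Combettes, SIAM J.\ Optim.\ 2013), and is used as a black box to guarantee the qualification hypothesis (5.15) of Problem 5.2. So there is no in-paper proof to measure you against. Your overall strategy --- write (5.16) as $\min_{\mathbf{x}} F(\mathbf{x})+G(\mathbf{L}\mathbf{x})$, apply Fermat's rule, and obtain (5.15) from the sum/chain rule $\partial(F+G\circ\mathbf{L})=\partial F+\mathbf{L}^{*}\circ\partial G\circ\mathbf{L}$ under a constraint qualification --- is indeed the standard route and is essentially how the cited source proceeds, so the architecture of your argument is sound.

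There are, however, two concrete gaps. The more serious one is in case (i): the hypothesis is quantified over $k$, i.e.\ each \emph{row} map $\mathbf{x}\mapsto\sum_{j}L_{k,j}x_{j}$ is surjective onto $G_{k}$, and this does \emph{not} imply that the stacked operator $\mathbf{L}$ is surjective onto $\mathbf{G}$ (take $p=1$, $q=2$, $L_{1,1}=L_{2,1}=\mathrm{Id}$: both rows are surjective but $\mathrm{ran}\,\mathbf{L}$ is the diagonal). Hence your claim that $\mathbf{L}(\mathrm{dom}\,F)-\mathrm{dom}\,G=\mathbf{G}$ does not follow, and you must verify $0\in\mathrm{sri}\bigl(\mathbf{L}(\mathrm{dom}\,F)-\mathrm{dom}\,G\bigr)$ by a different argument (or work per coordinate, exploiting that $\mathrm{dom}\,G$ is a product of the sets $\mathrm{dom}\,g_{k}+\mathrm{dom}\,l_{k}$); as written the step fails. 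The second gap is your treatment of $\partial(g_{k}\Box l_{k})=\partial g_{k}\Box\partial l_{k}$: you dismiss it in cases (i)--(ii) on the grounds that $\mathrm{dom}\,G=\mathbf{G}$, but that is not the operative reason and it leaves case (iii) unaddressed. The correct justification, valid in all three cases, is the standing assumption that $l_{k}$ is strongly convex: then $l_{k}^{*}$ is Fr\'echet differentiable with full domain, so $g_{k}\Box l_{k}=(g_{k}^{*}+l_{k}^{*})^{*}$ is proper, the infimal convolution is exact, and $\partial(g_{k}\Box l_{k})=\bigl(\partial g_{k}^{*}+\nabla l_{k}^{*}\bigr)^{-1}=\bigl((\partial g_{k})^{-1}+(\partial l_{k})^{-1}\bigr)^{-1}=\partial g_{k}\Box\partial l_{k}$, with no constraint qualification needed. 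With that identity in hand and a correct verification of the qualification in case (i), your Fermat-plus-chain-rule derivation does yield (5.15) with a single dual selection $v_{k}$ common to all $j$, which is the intended reading.
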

The following result can be deduced from Theorem 5.1:
\begin{thm}
Let $\mathbf{F}$  and  $\mathbf{R}$ be defined as in Theorem 5.1.
 $\forall j\in\{1,\cdots,p\}$, let  $\nu_{j}^{-1}\in(0,+\infty)$ be a Lipschitz constant of the gradient of $h_{j}\circ F_{j}^{\frac{1}{2}}$, and
 $\forall k\in\{1,\cdots,q\}$, let $\tilde{\tau}_{k}^{-1}\in(0,+\infty)$ be a Lipschitz constant of the gradient of $l_{k}^{\ast}\circ R_{k}^{\frac{1}{2}}$.
 Assume that $(\exists a\in(0,+\infty))$  $2\mu_{a}>1$ where the definition of $\mu_{a}$ is similar with the definition of $\vartheta_{a}$ in Lemma 4.3 [3] with $\nu=\min\{\nu_{1},\cdots,\nu_{p}\}$ and $\tilde{\tau}=\min\{\tilde{\tau}_{1},\cdots,\tilde{\tau}_{q}\}$. Let $\mathbf{x}_{0}, \mathbf{x}_{1}$ be  $\mathbf{E}$-valued
random variables which are arbitrarily chosen, and let $\mathbf{y}_{0}, \mathbf{y}_{1}$ be  $\mathbf{G}$-valued
random variables which are arbitrarily chosen. Let $(\varepsilon_{n})_{n\in\mathbb{N}}$ be identically distributed $M_{p+q}$-valued random variables.  For $n\geq0$:
$$
 \left\{
\begin{array}{l}
for ~j=,\cdots,p,\\
w_{j,n}=x_{j,n}+\alpha_{n}(x_{j,n}-x_{j,n-1}),\\
z_{j,n}=\varepsilon_{j,n}(prox_{f_{j}}^{F_{j}^{-1}}(w_{j,n}-F_{j}(\sum_{k\in\mathbb{L}_{j}^{\ast}}L_{k,j}^{\ast}y_{k,n}+\nabla h_{j}(w_{j,n})))),\\
x_{j,n+1}=w_{j,n}+\varepsilon_{j,n}\lambda_{n}(z_{j,n}-w_{j,n}),\\
for ~k=,\cdots,q,\\
\tilde{h}_{k,n}=y_{k,n}+\alpha_{n}(y_{k,n}-y_{k,n-1}),\\
s_{k,n}=\varepsilon_{p+k,n}(prox_{g_{k}^{\ast}}^{B_{k}^{-1}}(\tilde{h}_{k,n}+R_{k}(\sum_{j\in\mathbb{L}_{k}}L_{k,j}(2z_{j,n}-w_{j,n})-\nabla l_{k}^{\ast}(\tilde{h}_{k,n})))),\\
y_{k,n+1}=\tilde{h}_{k,n}+\varepsilon_{p+k,n}\lambda_{n}(s_{k,n}-\tilde{h}_{k,n}),
\end{array}
\right.\eqno{(5.18)}
$$
and set $\mathcal{E}_{n}=\sigma(\varepsilon_{n})$, $\mathcal{\tilde{X}}_{n}=\sigma(\mathbf{x}_{n'},\mathbf{y}_{n'})_{0\leq n'\leq n}$. Moreover, suppose that Conditions (i)-(iii) in Theorem 5.1 hold.\\
Then $(\mathbf{x}_{n})_{n\in\mathbb{N}}$  converges weakly P-a.s. to a $\mathbf{\tilde{Z}}$-valued random variable $\mathbf{\hat{x}}$, and  $(\mathbf{y}_{n})_{n\in\mathbb{N}}$  converges weakly P-a.s. to a $\mathbf{\tilde{Z}}^{\ast}$-valued random variable $\mathbf{\hat{y}}$.
\end{thm}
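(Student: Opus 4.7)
The plan is to recast Problem 5.2 as a particular instance of the monotone inclusion Problem 5.1 and then invoke Theorem 5.1 directly. The identifications are $A_j = \partial f_j$, $B_k = \partial g_k$, $C_j = \nabla h_j$ and $\tilde{D}_k = \partial l_k$. Once these substitutions are made, the only work remaining is to check that the hypotheses of Problem 5.1 and Theorem 5.1 hold, to verify that Algorithm (5.18) coincides with Algorithm (5.7) under this identification, and to match the solution sets $\mathbf{Z}$ and $\mathbf{Z}^{\ast}$ with $\mathbf{\tilde{Z}}$ and $\mathbf{\tilde{Z}}^{\ast}$.

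First I would check the hypotheses of Problem 5.1. By the theorem of Moreau, $\partial f_j$ and $\partial g_k$ are maximally monotone; since $l_k$ is strongly convex, $\partial l_k$ is maximally and strongly monotone and $(\partial l_k)^{-1} = \nabla l_k^{\ast}$ is single-valued. The Lipschitz hypothesis that $\nabla(h_j \circ F_j^{1/2})$ has constant $\nu_j^{-1}$, combined with the Baillon--Haddad theorem, yields that $F_j^{1/2}\nabla h_j F_j^{1/2}$ is $\nu_j$-cocoercive; an analogous argument applied to $\nabla(l_k^{\ast}\circ R_k^{1/2})$ furnishes the $\tilde{\tau}_k$-cocoercivity required of the dual block. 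Under the qualification condition (5.15), subdifferential calculus identifies the inclusion (5.3) with the Fermat optimality condition for the primal problem (5.16), so that $\mathbf{Z} = \mathbf{\tilde{Z}}$; a parallel Fenchel-dual computation gives $\mathbf{Z}^{\ast} = \mathbf{\tilde{Z}}^{\ast}$.

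Next I would recognize Algorithm (5.18) as Algorithm (5.7) for the above operators. The three identifications to verify are $J_{F_j\partial f_j} = prox_{f_j}^{F_j^{-1}}$ (resolvent of a subdifferential in the $F_j^{-1}$-weighted inner product), $J_{R_k(\partial g_k)^{-1}} = prox_{g_k^{\ast}}^{R_k^{-1}}$ (using the standard identity $(\partial g_k)^{-1} = \partial g_k^{\ast}$), and $(\partial l_k)^{-1} = \nabla l_k^{\ast}$. Substituting these into the resolvent-based updates of (5.7) produces precisely the proximal updates of (5.18).

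The main obstacle will be the bookkeeping of the moduli: one must check that the quantity $\mu_a$ governing the condition $2\mu_a > 1$ of Theorem 5.2 is exactly the one demanded by Theorem 5.1 after translation via Baillon--Haddad in the reweighted norms, in particular that $\nu = \min_j \nu_j$ and $\tilde{\tau} = \min_k \tilde{\tau}_k$ furnish the correct cocoercivity moduli for the block operators $\mathbf{F}^{1/2}\mathbf{C}\mathbf{F}^{1/2}$ and $\mathbf{R}^{1/2}\mathbf{\tilde{D}}^{-1}\mathbf{R}^{1/2}$. Once this calibration is in place, together with the inheritance of the measurability and independence conditions on $(\mathcal{E}_n)$ and $(\mathcal{\tilde{X}}_n)$ from the assumptions of Theorem 5.1, Theorem 5.1 directly delivers the almost-sure weak convergence of $(\mathbf{x}_n, \mathbf{y}_n)$ to a $(\mathbf{\tilde{Z}}\times\mathbf{\tilde{Z}}^{\ast})$-valued random variable, which is the claim.
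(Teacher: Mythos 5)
Your proposal follows essentially the same route as the paper: both proofs set $A_j=\partial f_j$, $C_j=\nabla h_j$, $B_k=\partial g_k$, $\tilde{D}_k^{-1}=\nabla l_k^{\ast}$, identify the resolvents $J_{F_jA_j}$ and $J_{R_kB_k^{-1}}$ with the weighted proximity operators, convert the Lipschitz-gradient hypotheses into the required cocoercivity via the Baillon--Haddad-type equivalence, and then invoke the monotone-inclusion convergence result of Theorem 5.1. Your write-up is in fact somewhat more careful than the paper's (e.g.\ in explicitly matching $\mathbf{\tilde{Z}}$, $\mathbf{\tilde{Z}}^{\ast}$ with $\mathbf{Z}$, $\mathbf{Z}^{\ast}$ and in flagging the calibration of the moduli $\nu$, $\tilde{\tau}$), but the underlying argument is the same.
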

\begin{proof}
$\forall j\in\{1,\cdots,p\}$, we set $A_{j}=\partial f_{j}$, $C_{j}=\nabla h_{j}$, and $\forall k\in\{1,\cdots,q\}$, $B_{k}=\partial g_{k}$, $D_{k}^{-1}=\nabla l_{k}^{\ast}$. Observing that $\forall j\in\{1,\cdots,p\}$ and
 $\forall k\in\{1,\cdots,q\}$, $J_{F_{j}A_{j}}=prox_{f_{j}}^{F_{j}^{-1}}$, $J_{R_{k}B^{-1}_{k}}=prox_{g_{k}^{\ast}}^{B_{k}^{-1}}$. In addition, the Lipschitz-differentiability assumptions made
on $h_{j}$ and $l_{k}^{\ast}$ are equivalent to the fact that $F_{j}^{\frac{1}{2}}C_{j}F_{j}^{\frac{1}{2}}$ is $\nu_{j}$-cocoercive and $R_{k}^{\frac{1}{2}}\tilde{D}_{k}^{-1} R_{k}^{\frac{1}{2}}$ is $\tilde{\tau}_{k}$-cocoercive.[21, Corollaries 16.42, 18.16]. The convergence result follows from [1, Proposition 5.3].

\end{proof}
In Problem 5.2, if $\forall j\in\{1,\cdots,p\}$, $f_{j}=0$, we can obtain the following  Corollary.
\begin{cor}
Let $\mathbf{F}$  and  $\mathbf{R}$ be defined as in Theorem 5.1.
 Let $\nu$ and $\tilde{\tau}$ be defined as in Theorem 5.4. Suppose that Condition $\min\{\nu,\tilde{\tau}(1-\|\mathbf{R}^{\frac{1}{2}}\mathbf{L}\mathbf{F}^{\frac{1}{2}}\|^{2})\}>\frac{1}{2}$ holds. Let $\mathbf{x}_{0}, \mathbf{x}_{1}$ be  $\mathbf{E}$-valued
random variables which are arbitrarily chosen, and let $\mathbf{y}_{0}, \mathbf{y}_{1}$ be  $\mathbf{G}$-valued
random variables which are arbitrarily chosen. Let $(\varepsilon_{n})_{n\in\mathbb{N}}$ be identically distributed $M_{p+q}$-valued random variables.  For $n\geq0$:
$$
 \left\{
\begin{array}{l}
for ~j=,\cdots,p,\\
\xi_{j,n}=\max\{\varepsilon_{p+k,n}|k\in \mathbb{L}_{j}^{\ast}\},\\
w_{j,n}=x_{j,n}+\alpha_{n}(x_{j,n}-x_{j,n-1}),\\
z_{j,n}=\xi_{j,n}(w_{j,n}-F_{j}\nabla h_{j}(w_{j,n})),\\
for ~k=,\cdots,q,\\
\tilde{h}_{k,n}=y_{k,n}+\alpha_{n}(y_{k,n}-y_{k,n-1}),\\
s_{k,n}=\varepsilon_{p+k,n}(prox_{g_{k}^{\ast}}^{B_{k}^{-1}}(\tilde{h}_{k,n}+R_{k}(\sum_{j\in\mathbb{L}_{k}}L_{k,j}(z_{j,n}-F_{j}\sum_{k'\in\mathbb{L}_{j}^{\ast}}L_{k',j}^{\ast}y_{k',n})-\nabla l_{k}^{\ast}(\tilde{h}_{k,n})))),\\
y_{k,n+1}=\tilde{h}_{k,n}+\varepsilon_{p+k,n}\lambda_{n}(s_{k,n}-\tilde{h}_{k,n}),\\
for ~j=,\cdots,p,\\
x_{j,n+1}=w_{j,n}+\varepsilon_{j,n}\lambda_{n}(z_{j,n}-F_{j}\sum_{k\in\mathbb{L}_{j}^{\ast}}L_{k,j}^{\ast}s_{k,n}-w_{j,n}),
\end{array}
\right.\eqno{(5.19)}
$$
and set $\mathcal{E}_{n}=\sigma(\varepsilon_{n})$, $\mathcal{\tilde{X}}_{n}=\sigma(\mathbf{x}_{n'},\mathbf{y}_{n'})_{0\leq n'\leq n}$. Moreover, suppose that Condition (i) and (iii) in Theorem 5.1 is satisfied
and the following
hold:\\
 For every $k\in\{1, \cdots,q\}$ and $n\in\mathbb{N}$, $\bigcup_{j\in\mathbb{L}_{k}}\{\omega\in\Omega|\varepsilon_{j,n}(\omega)=1\}\subset\{\omega\in\Omega|\varepsilon_{p+k,n}(\omega)=1\}$.\\
Then $(\mathbf{x}_{n})_{n\in\mathbb{N}}$  converges weakly P-a.s. to a $\mathbf{\tilde{Z}}$-valued random variable $\mathbf{\hat{x}}$, and  $(\mathbf{y}_{n})_{n\in\mathbb{N}}$  converges weakly P-a.s. to a $\mathbf{\tilde{Z}}^{\ast}$-valued random variable $\mathbf{\hat{y}}$.

\end{cor}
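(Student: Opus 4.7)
The plan is to deduce this corollary directly from Theorem 5.4 by specializing to $f_j = 0$ and then showing that (5.19) is an algebraic rearrangement of (5.18) under this specialization.

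First I would set $f_j = 0$ for every $j\in\{1,\ldots,p\}$ in Problem 5.2. Then $\partial f_j = \{0\}$, so that $A_j = 0$ in the associated monotone inclusion, and the proximity operator $\mathrm{prox}_{f_j}^{F_j^{-1}}$ collapses to the identity on $E_j$. Condition (5.15) is then automatic from Proposition 5.1 (for instance part (i), since $f_j \equiv 0$ is real-valued), so Problem 5.2 is well posed and we may invoke Theorem 5.4. Plugging $\mathrm{prox}_{f_j}^{F_j^{-1}} = \mathrm{Id}$ into (5.18) turns the primal update into
$$z_{j,n} = \varepsilon_{j,n}\bigl(w_{j,n} - F_j\bigl(\sum_{k\in\mathbb{L}_j^{\ast}} L_{k,j}^{\ast} y_{k,n} + \nabla h_j(w_{j,n})\bigr)\bigr),$$
while the dual update in (5.18) and the relaxation step for $x_{j,n+1}$ remain unchanged.

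Next I would show that (5.19) is an algebraic rewriting of this specialized scheme using a Gauss--Seidel reordering of the blocks, as in the primal-dual recasting of Proposition 5.5 of [3]. The auxiliary variable $z_{j,n} = \xi_{j,n}(w_{j,n} - F_j\nabla h_j(w_{j,n}))$ in (5.19) captures the forward step without the dual contribution, so that $z_{j,n} - F_j\sum_{k'\in\mathbb{L}_j^{\ast}} L_{k',j}^{\ast} y_{k',n}$ equals the full primal update that feeds into the dual proximal step through $2z_{j,n} - w_{j,n}$ in the specialization of (5.18). The activation variable $\xi_{j,n} = \max\{\varepsilon_{p+k,n} : k \in \mathbb{L}_j^{\ast}\}$ is defined precisely so that $z_{j,n}$ is updated whenever any dual block indexed by $k\in\mathbb{L}_j^{\ast}$ is activated, which is the condition needed to keep the reorganization equivalent to the simultaneous update in Theorem 5.4; the final line of (5.19) then rewrites the primal relaxation of (5.18) after substituting the freshly computed $s_{k,n}$ in place of $y_{k,n}$ under the asynchronous rule.

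Finally I would check the two cocoercivity/step-size hypotheses. The scalar inequality $\min\{\nu,\tilde{\tau}(1-\|\mathbf{R}^{1/2}\mathbf{L}\mathbf{F}^{1/2}\|^2)\}>1/2$ is the standard sufficient condition that yields some $a\in(0,+\infty)$ with $2\mu_a > 1$, exactly as in Remark 4.4 of [3]; verifying this is a one-line minimization in $a$ using the definition of $\mu_a$. For the activation compatibility, the hypothesis of Corollary 5.2 imposes $\bigcup_{j\in\mathbb{L}_k}\{\varepsilon_{j,n}=1\} \subset \{\varepsilon_{p+k,n}=1\}$, with the primal and dual roles interchanged relative to Theorem 5.4; this is the correct condition for the reorganized order "primal auxiliary, then dual, then primal relaxation" and it guarantees that the $\xi_{j,n}$ auxiliary enforces the original compatibility condition required to invoke Theorem 5.4. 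With all the assumptions of Theorem 5.4 thereby verified, the conclusion transfers and yields the claimed almost sure weak convergence of $(\mathbf{x}_n)_{n\in\mathbb{N}}$ to a $\mathbf{\tilde{Z}}$-valued random variable and of $(\mathbf{y}_n)_{n\in\mathbb{N}}$ to a $\mathbf{\tilde{Z}}^{\ast}$-valued random variable.

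The main obstacle will be the bookkeeping in the middle step: explicitly matching the Gauss--Seidel ordering of (5.19) with the Jacobi-style ordering of (5.18) on every sample path, showing that the introduction of $\xi_{j,n}$ and the placement of $F_j\sum_k L_{k,j}^{\ast} s_{k,n}$ in the final update exactly reproduce the composite operator applied at step $n$, without altering the fixed-point set. Once this equivalence is established, the probabilistic content is inherited essentially for free from Theorem 5.4.
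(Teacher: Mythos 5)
The paper itself offers no proof of Corollary 5.5 beyond the introductory sentence that it is the case $f_j=0$ of Problem 5.2, so your instinct to specialize is reasonable; but the central step of your argument --- that (5.19) is an algebraic rearrangement of (5.18) with $\mathrm{prox}_{f_j}^{F_j^{-1}}=\mathrm{Id}$ --- does not hold, and this is a genuine gap. Compare the arguments fed to the dual proximal step: in the specialized (5.18) it is $\sum_{j\in\mathbb{L}_k}L_{k,j}(2z_{j,n}-w_{j,n})$ with $2z_{j,n}-w_{j,n}=w_{j,n}-2F_j\nabla h_j(w_{j,n})-2F_j\sum_{k'}L_{k',j}^{\ast}y_{k',n}$ on the activated blocks, whereas in (5.19) it is $\sum_{j\in\mathbb{L}_k}L_{k,j}\bigl(w_{j,n}-F_j\nabla h_j(w_{j,n})-F_j\sum_{k'}L_{k',j}^{\ast}y_{k',n}\bigr)$; the gradient and dual contributions enter with different coefficients, so no reordering reconciles them. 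Likewise the primal relaxation in (5.18) uses the old dual iterate $y_{k,n}$ (hidden inside $z_{j,n}$), while the last line of (5.19) uses the freshly computed $s_{k,n}$. These are two different primal-dual schemes (V\~{u}--Condat type versus the Loris--Verhoeven/Chen--Huang--Zhang type singled out in Remark 5.1(ii)), not two orderings of the same one. A further signal that the corollary is not a literal specialization of Theorem 5.4 is its hypothesis $\min\{\nu,\tilde{\tau}(1-\|\mathbf{R}^{1/2}\mathbf{L}\mathbf{F}^{1/2}\|^{2})\}>\tfrac12$, which replaces, rather than implies, the condition $2\mu_a>1$; your claim that the former ``yields some $a$ with $2\mu_a>1$'' is asserted but not established, and is not how the condition is used in the source result [3, Proposition 5.4].

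The route that actually works, following [3, Proposition 5.4], is to bypass Theorem 5.4 entirely and invoke the composition result for averaged operators (Corollary 3.6, resting on Lemma 2.4 and Theorem 3.5): one writes the iteration as $\mathbf{T}_n\circ\mathbf{V}_n$ in the metric induced by $\mathbf{F}$ and $\mathbf{R}$, where $\mathbf{V}_n$ is the explicit forward operator built from $\nabla h_j$ and the linear coupling (averaged because $\nu>\tfrac12$) and $\mathbf{T}_n$ contains the dual proximal step and the primal correction by $s_{k,n}$ (averaged because $\tilde{\tau}(1-\|\mathbf{R}^{1/2}\mathbf{L}\mathbf{F}^{1/2}\|^{2})>\tfrac12$). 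The reversed activation condition $\bigcup_{j\in\mathbb{L}_k}\{\varepsilon_{j,n}=1\}\subset\{\varepsilon_{p+k,n}=1\}$ and the auxiliary variables $\xi_{j,n}$ are exactly what make the block-activated version of this composition well defined. A minor additional point: with $f_j=0$ the qualification (5.15) is not automatic from Proposition 5.1(i) as you claim, since that item also requires surjectivity of the coupling maps; one should instead argue that $\tilde{\mathbf{Z}}\neq\emptyset$ is being assumed through Problem 5.2 itself.
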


\begin{rmk}
Our results improve and extend the results of other people in the following aspects.\\
(i) If $\alpha_{n}=0, \forall n\in \mathbb{N}$ in Theorem 5.1,  Theorem 5.4 and Corollary 5.5,  we can obtain the Proposition 4.6, Proposition 5.3 and Proposition 5.4 of Jean-Christophe and Audrey [3] in the absence of errors.\\
(ii) If $\forall n\in \mathbb{N}$, $\alpha_{n}=0$ and $p=1$, Algorithm (5.18) extends the deterministic approaches in [11-15] by introducing some random sweeping of the coordinates in the absence of errors. Similarly, when  $\alpha_{n}=0$, $p=q=1$, $l_{1}=\iota_{\{0\}}$, $F_{1}=\bar{\tau} I$, with $\bar{\tau}\in(0,+\infty)$, $R_{1}=\bar{\rho} I$  with $\bar{\rho}\in(0,+\infty)$,  $E_{1}$ and $G_{1}$ are finite dimensional spaces and  $\lambda_{n}\equiv1, n\in\mathbb{N}$,  Algorithm (5.19) extends the algorithms
in [16-17] which were developed in a deterministic setting.\\
(iii)If $\forall k\in\{1,\cdots,q\}, B_{k}=\tilde{D}_{k}=$, $p=1$ and $\lambda_{n}\equiv1, n\in\mathbb{N}$,  Algorithm (5.7) extends the deterministic approaches in [18] by introducing some random sweeping of the coordinates in the absence of errors.\\
(iv) Theorem 3.2 extends the corresponding results Theorem 5 of Radu Ioan, Ern\"{o} Robert and Christopher [1] from a
nonexpansive mapping to a quasinonexpansive
mapping.\\
(v) In Theorem 2.1, Corollary 2.3, Theorem 3.2 and Theorem 3.5, if $\alpha_{n}=0, \forall n\in \mathbb{N}$,  we can obtain Theorem 2.5, Corollary 2.7, Theorem 3.2 and Corollary 3.8 of Patrick,  Combettes and Jean-Christophe [2] in the absence of errors.
\end{rmk}

\bigskip
\bigskip

\noindent \textbf{Acknowledgements}

This work was supported by the National Natural Science Foundation
of China (11131006, 41390450, 91330204, 11401293), the National
Basic Research Program of China (2013CB 329404), the Natural Science
Foundations of Jiangxi Province (CA20110\\
7114, 20114BAB 201004).

\end{document}